\documentclass[12pt,a4paper]{article}%
\usepackage{amsmath}
\usepackage{amssymb}
\usepackage{makeidx}
\usepackage{amsfonts}
\setcounter{MaxMatrixCols}{30}%
\usepackage{graphicx}
\usepackage{pictex}
\usepackage{verbatim}

\providecommand{\U}[1]{\protect\rule{.1in}{.1in}}

\newtheorem{theorem}{Theorem}[section]

\newtheorem{lemma}[theorem]{Lemma}
\newtheorem{proposition}[theorem]{Proposition}

\newtheorem{definition}[theorem]{Definition}
\newtheorem{example}[theorem]{Example}
\newtheorem{remark}[theorem]{Remark}
\newenvironment{proof}[1][Proof]{\noindent\textbf{#1.} }{\hfill \rule{0.5em}{0.5em}}

\usepackage[english,polish]{babel}
\usepackage[colorlinks,citecolor=blue,urlcolor=blue,bookmarks=true]{hyperref}
\hypersetup{
pdfpagemode=UseNone,
pdfstartview=FitH,
pdfdisplaydoctitle=true,
pdfborder={0 0 0}, 
pdftitle={Oscillating heat kernels},
pdfauthor={Alexander Bendikov, Wojciech Cygan, Wolfgang Woess},
pdflang=en-US
}

\begin{document}

\title{Oscillating heat kernels on ultrametric spaces }
\author{Alexander Bendikov
\and Wojciech Cygan
\and Wolfgang Woess}
\date{}
\maketitle
\begin{center}
\textit{Dedicated to Alexeander A. Grigor'yan\\ on the occasion
of his 60th birthday}
\end{center}
\begin{abstract}
Let $(X,d)$ be a proper ultrametric space. Given a measure
$m$ on $X$ and a function $B \mapsto C(B)$ defined on the collection 
of all non-singleton balls $B$ of $X$, we consider the associated hierarchical Laplacian 
$L=L_{C}\,$. The operator $L$ acts in $\mathcal{L}^{2}(X,m),$ is essentially self-adjoint 
and has a pure point spectrum. 
It 
admits a continuous heat kernel $\mathfrak{p}(t,x,y)$ with respect to $m$. We 
consider the case when $X$ has a transitive group of isometries under which the operator 
$L$ is invariant and study the 
asymptotic behaviour of the function $t\mapsto \mathfrak{p}(t,x,x)=\mathfrak{p}(t)$.
It is completely monotone, but does not vary regularly. 
When $X=\mathbb{Q}_{p}\,$, the ring of
$p$-adic numbers, and $L=\mathcal{D}^{\alpha} $, the operator of \ fractional
derivative of order $\alpha,$ we show that $\mathfrak{p}(t)=t^{-1/\alpha}\mathcal{A}%
(\log_{p}t)$, where $\mathcal{A}(\tau)$ is a continuous 
non-constant $\alpha$-periodic function. We also study asymptotic behaviour 
of $\min\mathcal{A}$ and $\max\mathcal{A}$ as the space parameter $p$ tends to $\infty$. 
When $X=S_{\infty}\,$, the infinite symmetric group, and $L$ is a hierarchical 
Laplacian with metric structure analogous to $\mathcal{D}^{\alpha},$  we show that, 
contrary to the previous case, the completely monotone function $\mathfrak{p}(t)$
oscillates between two functions $\psi(t)$ and $\Psi(t)$ such that
$\psi(t)/\Psi(t)\to 0$ as $t \to \infty\,$.{\let\thefootnote\relax\footnote
{2010 \emph{Mathematics Subject Classification.} 
60J35;  
12H25, 
20K25, 
47S10, 
60J25. 

\emph{Key words and phrases.} Ultrametric space, hierarchical Laplacian, 
isotropic Markov semigroup, heat kernel, return probabilities, oscillations.

Supported by Austrian Science Fund project FWF P24028, NAWI Graz, and National 
Science Centre, Poland, grants 2013/11/N/ST1/03605 and 2015/17/B/ST1/00062.
}}

\end{abstract}


\section{Introduction}\label{sec:intro}

\setcounter{equation}{0} At the centre of this paper stands the
study of the on-diagonal asymptotics of the heat kernels for
isotropic Markov processes on homogeneous ultrametric spaces.
We focus on the precise computation of its periodic oscillations
and the corresponding background.

Motivation for this paper comes from various sides. One is the interest
in properties of random walks  on infinite
groups, in particular asymptotics of transition probabilities. 
While there is a great body of rigorous mathematical work
on this subject regarding finitely generated groups, here we have drawn 
inspiration from the work on \emph{infinitely generated} ones. This goes back to
Darling and Erd\"os \cite{DarErd} and was further pursued by various authors,
among which Flatto and Pitt \cite{FlatPitt}, Cartwright \cite{Cartwright},
Lawler \cite{Lawler} and Brofferio and Woess \cite{BrofWoess2001}.
Most notably, \cite{Cartwright} is to our knowledge the first to have
exhibited evidence of oscillatory behaviour for return probabilities in
such a case. 
A similar phenomenon was found for random walks on the infinite graph
versions of certain fractals: on the Sierpinski-graphs 
in arbitrary dimension, precise asymptotics and periodic oscillations 
were exhibited by Grabner and Woess \cite{GraWoess}, \cite[\S 16]{Wbook},   
later generalised to a larger class of self-similar graphs in \cite{KroenTeufl}.

The above-mentioned random walks on infinitely generated groups 
can be described as isotropic Markov processes on ultrametric spaces in discrete as
well as continuous time. In Theoretical Physics, such spaces are also known
as \emph{hierarchical lattices,} proposed by F. J. Dyson in his famous paper 
on the phase transition for the $\mathbf{1D}$ ferromagnetic model with long range
interaction~\cite{Dyson2}.
The corresponding \emph{hierarchical Laplacian} $L$ was studied in a variety  
of papers, see e.g.~\cite{Bovier},
\cite{AlbeverioKarwowski}, \cite{Kritchevski1}, \cite{Kritchevski2},
\cite{Kozyrev}, \cite{Krutikov1}, 
\cite{Molchanov}, \cite{Rodriges-Zuniga}, \cite{Dellacherie2009}. It is the generator
of a Markov process, which in the situation of discrete time and space is
a random walk on an inifinitely generated group as mentioned above.

A systematic study of isotropic Markov semigroups defined on an ultrametric
measure space $(X,d,m)$ has been carried out by Bendikov, Grigor'yan and Pittet 
 in~\cite{BGP} (where $X$ is discrete) and by the same authors plus Woess in~\cite{BGPW} 
(where $X$ may contain both isolated and non-isolated points). The approach
of those papers puts the previous work into a well accessible framework which 
has lead to a good unterstanding and various new results. One of the key features
is that for the Markov generator (that is, the hierarchical Laplacian), 
one obtains a full description
of the spectrum, which is pure point, along with a complete system of compactly
supported eigenfunctions.

On the basis of this spectral decomposition, we study here the asymptotics
of the diagonal elements of the heat kernel (transition kernel).
We now display the setup and basic features in more detail. 

\bigskip

\noindent
\textbf{General setup and basic facts}

\medskip

The state space of our processes is assumed to be a proper 
ultrametric space $(X,d)$. 
Recall that proper means that closed balls are compact.
Our metric $d$ must satisfy the ultrametric inequality
\begin{equation}
d(x,y)\leq\max\{d(x,z),d(z,y)\}.
\end{equation}
A basic consequence 
is that each open ball is compact. Furthermore, for each $x \in X$, the set of 
distances $\{d(x,y) : y \in X\}$ is countable and does not accumulate in
$(0\,,\,\infty)$. Two balls are either disjoint or one is contained in the
other. The collection of all balls with a fixed positive radius forms
a countable partition of $X$, and decreasing the radius leads to a refined
partition: this is precisely the structure of a ``hierarchical lattice''
as in the old papers, going back to \cite{Dyson2}.

\emph{The analysis undertaken here is of interest in the case when
$X$ is non-compact, which will be assumed throughout this paper.}

Our setup is based on the following two ingredients.
The first is a Radon measure $m$ on $X$ such that $m(X)= \infty$ and $m(B)>0$ for each closed
ball which is not a singleton, and $m(\{a\})>0$ if and only if $a$ is an isolated
point of $X$.
Now let $\mathcal{B}$ be the collection of all balls with $m(B) > 0$. Then each $B \in \mathcal{B}$
has unique \emph{predecessor} or \emph{parent} $B' \in \mathcal{B} \setminus \{B\}$
which contains $B$ and is such that $B \subseteq D \subseteq B'$ for $D \in \mathcal{B}$
implies $D \in \{ B, B'\}$. In this case, $B$ is called a \emph{successor} of $B'$. 
By properness of $X$,
each non-singleton ball has only finitely many (and at least 2) successors. Their number is
the \emph{degree} of the ball.

The second ingredient is a \emph{choice function}  $C:\mathcal{B} \rightarrow(0\,,\,\infty)$ 
which must satisfy, for all $B\in\mathcal{B}$ and
all non-isolated $a\in X$,
\begin{equation}\label{eq:C1condition}%
\begin{aligned}
\lambda(B)&=\sum\limits_{D\in\mathcal{B}\,:\,D\supseteq B}C(D)<\infty\,, \quad\text{and}
\\
\lambda(\{a\})&=\sum_{B\in\mathcal{B}\,:\,B \ni a}C(B)=\infty\,.
\end{aligned}
\end{equation}
Let $\mathcal{F}$ be the set of all locally constant functions having compact
support. Given the space $X$, the measure $m$ and the choice function $C$, 
we define (pointwise) the hierarchical Laplacian $L_{C}\,$: 
for each $f$ in $\mathcal{F}$ and $x\in X$,
\begin{equation}\label{eq:hlaplacian}
L_{C}f(x):=\sum\limits_{B\in\mathcal{B}\,:\, B \ni x} C(B)
\left(f(x)-\frac{1}{m(B)}\int_{B}f\,dm\right). 
\end{equation}
The present work builds upon the following facts concerning $L_C$ and its
spectrum, which were proved in \cite{BGP} (discrete space) and in full generality
in \cite{BGPW}. The operator $(L_{C},\mathcal{F})$ acts in 
$\mathcal{L}^{2}=\mathcal{L}^{2}(X,m)$. It
is symmetric and admits an $\mathcal{L}^{2}$-complete system of eigenfunctions 
$\{ f_B : B \in \mathcal{B}\}$, 
given by
\begin{equation}
f_{B}=\frac{\mathbf{1}_{B}}{m(B)}-\frac{\mathbf{1}_{B^{\prime}}%
}{m(B^{\prime})}\,. \label{eigenfunction}%
\end{equation}
The eigenvalue 
corresponding to $f_{B}$ depends only on $B^{\prime}$ and is $\lambda(B^{\prime})$,
as given in \eqref{eq:C1condition}.
Since all $f_{B}$ belong to $\mathcal{F}$ and the system $\{f_{B} : B \in \mathcal{B}\}$ is
complete in $\mathcal{L}^{2}$ we conclude that $(L_{C},\mathcal{F})$ is an essentially
self-adjoint operator in $\mathcal{L}^{2}.$ By a slight abuse of notation, we shall write
$(L_{C},\mathsf{Dom}_{L_{C}})$ for its unique self-adjoint extension. For all
of this we refer to \cite{BendikovKrupski}, \cite{BGPW}. See also the related
papers 
\cite{Kochubey2004}, \cite{Kozyrev},
\cite{PearsonBellisard}.

Observe that to define the functions $C(B),$ $\lambda(B)$ and in particular
the operator $(L_{C},\mathsf{Dom}_{L_{C}})$ we do not need to specify the
ultrametric $d.$ What is needed is the family of all balls which evidently can
be the same for two different ultrametrics $d$ and $d^{\prime}$, via a feasible change of
the diameter function. On the other
hand, given the data $(X,d,m)$ and choosing the function
\begin{equation}\label{standard1}
C(B)=\frac{1}{\mathsf{diam}(B)}-\frac{1}{\mathsf{diam}(B^{\prime})},
\end{equation}
where $B\in \mathcal{B}$ and $B^{\prime}$ is the predecessor of $B$,
we obtain the hierarchical Laplacian $(L_{C},\mathsf{Dom}_{L_{C}})$ having 
eigenvalues of the form
\begin{equation}\label{standard2}
\lambda(B)=\frac{1}{\mathsf{diam}(B)}\,,\quad B \in \mathcal{B}.
\end{equation}
We will refer to the resulting operator $(L_{C},\mathsf{Dom}_{L_{C}})$ as the \emph{standard}
hierarchical Laplacian associated with $(X,d,m).$
Vice versa, even if we start with an \emph{a priori} ultrametric $d$ on $X$, any choice function
satisfying \eqref{eq:C1condition} induces the possibly different \emph{intrinsic} metric $d_*$ via 
\eqref{standard1} and \eqref{standard2}: one sets 
\begin{equation}\label{eq:intrinsic}
\mathsf{diam}_*(B)=\frac{1}{\lambda(B)}\,,\quad B \in \mathcal{B},
\end{equation}
so that for distinct $x,y \in X$,  one has $d_*(x,y) = \mathsf{diam}_*(B)$, where $B$ is the
smallest ball containing both elements. By construction, the collection of $d_*$-balls coincides
with the collection of $d$-balls, so that both metrics induce the same family of hierarchical
Laplacians when varying the choice function~$C$.

The general theory developed in \cite{BGP} and \cite{BGPW}
applies here. In particular, under mild assumptions -- which hold in the 
situations that we consider
here -- the semigroup $(P^{t})$ induced by $L_C$
admits a continuous heat kernel $\mathfrak{p}(t,x,y)$ with respect to $m$. 

It has an integral representation in terms of the \emph{spectral function}
\begin{equation}\label{specfun}
N(x,\tau) = 1 \big/ m\bigl(B_*(x,1/\tau)\bigr)\,,
\end{equation}
where $B_*(x,r) = \{ y\in X : d_*(x,y) \le r \}$, 
see  \cite[Def. 2.8.]{BGPW}. In particular, 
\begin{equation}\label{on-diagonal p}%
\mathfrak{p}(t,x,x)=t\int\limits_{0}^{\infty}N(x,\tau)\exp(-t\tau)\,d\tau\,.
\end{equation}
Since $\tau \mapsto N(x,\tau)$ is a step function, the integral reduces to
an infinite sum.
For the analysis undertaken in this paper, we require the following to hold.

\begin{definition}\label{def:homogeneous}
\rm The ultrametric measure space $(X,d,m)$ and the hierarchical Laplacian $L_{C}$ 
are called \emph{homogeneous,} if there is a group 
of isometries of $(X,d)$ which
\begin{itemize}
\item acts transitively on $X$, and

\item leaves both the reference measure $m$ and the function $C(B)$ invariant.
\end{itemize}
\end{definition}

The first assumption implies that $(X,d)$ is either discrete or perfect. Basic
examples which we have in mind are

\begin{enumerate}
\item $X=\mathbb{Q}_{p}$ -- the ring of $p$-adic numbers, where $p \ge 2$ (integer).

\item $X = 
\bigoplus_{j=1}^{\infty} \mathbb{Z}(p)_j\,
$ --
the direct sum of countably many copies $\mathbb{Z}(p)_j$ of the additive group 
$\mathbb{Z}(p) = \mathbb{Z}/(p\,\mathbb{Z})$. 

\item $X=S_{\infty}\,$ -- the infinite symmetric group, that is, the group
of all permutations of the positive integers that fix all but finitely many elements.
\end{enumerate}

In this setting, our main goal is to study the asymptotic behaviour of the function
$t\mapsto \mathfrak{p}(t) = \mathfrak{p}(t,x,x)$ as $t$ tends to $0$ or to $\infty\,$; it does not
depend on $x$ by homogeneity. Our study was
inspired by the results of \cite{Cartwright}, \cite{GraWoess},
\cite{Lawler} and \cite{BrofWoess2001}. The papers \cite{BenBobPit2013} and
\cite{BenLSC2016} are direct forerunners of the present work. See also the related papers \cite{Dawson}, \cite{Dawson2} and \cite{Bojdecki}.

\medskip

Let us describe the main body of the paper. In Section \ref{sec:homogeneous} 
we discuss in more detail
the consequences of the homogeneity assumptions of Definition \ref{def:homogeneous}.
In particular, we show that the on-diagonal heat kernel $\mathfrak{p}(t)$ cannot vary regularly.
In Section \ref{sec:derivative} we consider $X=\mathbb{Q}_{p}\,$, the ring of $p$-adic numbers. 
We study the operator $\mathfrak{D}^{\alpha}$, the
$p$-adic fractional derivative of order $\alpha>0$. This operator was first 
considered by Taibleson \cite{Taibleson75} as a spectral multiplier on $\mathbb{Q}_{p}$ as
well as on $\mathbb{Q}_{p}^d\,$.
In relation to
the concept of $p$-adic Quantum Mechanics, it was introduced under the above name
by V.S. Vladimirov \cite{Vladimirov}, \cite{VladimirovVolovich}, \cite{Vladimirov94}. The
operator $\mathfrak{D}^{\alpha}$ is the most typical example of a 
homogeneous hierarchical Laplacian. 
We show that the associated on-diagonal heat kernel  on $\mathbb{Q}_{p}$ has the form 
$\mathfrak{p}_{\alpha}(t)=t^{-1/\alpha}\mathcal{A}(\log_{p}t)$, where 
$\mathcal{A}(\tau)$ is a non-constant strictly positive continuous $\alpha$-periodic
function, and that, as $p$ tends to infinity,
$$
\max\mathcal{A}(\tau)\rightarrow(e\alpha)^{-1/\alpha}\quad\text{and}\quad
\min\mathcal{A}(\tau)\sim\frac{(\ln p)^{1/\alpha}}{p}.
$$
In Section \ref{sec:rw} we briefly explain the equivalent approach in terms of isotropic Markov 
processes. In particular, we focus again on the homogeneous situation, where we get
isotropic random walks with discrete as well as continuous time on ultrametric groups. 
We consider there case where the group is the direct sum of a countable familiy of
copies of one finite group.

Finally, in Section \ref{sec:symmetric} we consider a class of homogeneous hierarchical Laplacians 
acting on $\mathcal{L}^{2}(X,m)$, where $X=S_{\infty}$ is the infinite symmetric group
and $m$ is the counting measure.  
We show that the function
$\mathfrak{p}(t)$ oscillates  between two functions
$\psi(t)$ and $\Psi(t)$, where $\Psi(t)/\psi(t) \to \infty$
as $t$ tends to infinity. This case is related to the card shuffling models of
\cite{Lawler} and \cite{BrofWoess2001}; compare with \cite{BGP}.

\section{The homogeneous Laplacian}\label{sec:homogeneous}
\setcounter{equation}{0}

In this section, we discuss general consequences of the homogeneity assumptions.
First of all, the set of distances $\{d(x,y) : y \in X \}$ is the same
for each $x \in X$. Since $X$ is assumed to be non-compact, we have the following
two cases.
\begin{itemize}
 \item[]\emph{Case 1.} $\;X$ is perfect, and
$\{d(x,y) : y \in X \} = \{0\} \cup \{ r_k : k \in \mathbb{Z} \}$,\\
where $r_k < r_{k+1}$ with $\lim\limits_{k\to \infty} r_k = \infty$
and  $\lim\limits_{k\to -\infty} r_k = 0\,$;\\[-14pt]
\begin{equation}\label{eq:cases}
 \quad
\end{equation}
\\[-43pt]
 \item[]\emph{Case 2.} $\;X$ is countable, and 
$\{d(x,y) : y \in X \} = \{ r_k : k \in \mathbb{N}_0 \}$, \\
where $r_0=0$, $r_k < r_{k+1}$ with $\lim\limits_{k\to \infty} r_k = \infty\,$.
\end{itemize}
In both cases, we let 
$$
\mathcal{B}_k = \{ B(x,r_k) : x \in X \}
$$ 
be the collection of all closed balls of diameter $r_k$ in $X$.
This is a partition of $X$, and it is finer than $\mathcal{B}_{k+1}$.
By homogeneity, all balls in $\mathcal{B}_k$ are isometric. In particular,
the number $n_k$ of successor balls is the same for each ball in  $\mathcal{B}_k\,$,
where $k \in \mathbb{Z}$ in Case 1, and $k \in \mathbb{N}_0$ in Case 2.
We call the two- or one-sided infinite sequence $(n_k)$ the
\emph{degree sequence} of $(X,d)$. Note that $2 \le n_k < \infty$.

When we pass from the \emph{a priori} metric $d$ to the intrinsic metric $d_*$
given by \eqref{eq:intrinsic},
the distances $r_k$ are transformed into new ones -- which we shall denote by $s_k$ --
but the collections $\mathcal{B}_k$ and the degree sequence remain the same.

We remark that one can associate an infinite \emph{tree} with $X$. Its vertex set is
$\mathcal{B}$, and there is an edge between any $B \in \mathcal{B}$ and its predecessor
$B'$. In this situation, $\mathcal{B}_k$ is the \emph{horocyle} of the tree with index $-k$,
and $X$ is the (lower) \emph{boundary} of that tree. For more details and figures
in a context close to the one discussed here, see \cite{BGPW}, as well as \cite{Figa-Tal1},
\cite{Figa-Tal2},  \cite{Dellacherie2009} and \cite{BendikovKrupski}. 

Independently of the initial algebraic or geometric model, our homogeneous ultrametric
space $(X,d)$ is uniquely determined by the degree sequence. 
For having homogeneity, the reference measure $m$ is also uniquely defined up to a constant
factor. If we normalise by setting $m(B) = 1$ for each $B \in \mathcal{B}_0$, then
\emph{a fortiori}, for any $k \in \mathbb{Z}$ (Case 1), resp. $k \in \mathbb{N}_0$ (Case 2),  
$$
\begin{aligned}
m(B) &= V(k) \; \text{ for all }\; B \in \mathcal{B}_k\,, \quad\text{where}\quad V(0)=1,\\
V(k) &= \begin{cases} n_1 n_2 \cdots n_k &\text{for $k > 0$, and}\\
                      1/(n_{k+1} n_{k+2} \cdots n_0)&\text{for $k < 0$ in Case 1.}
        \end{cases}
\end{aligned}
$$
This determines $m$ uniquely as a measure on the Borel $\sigma$-algebra of $X$.
Regarding the hierarchical Laplacian, homogeneity means that $C(B) = C_k$ is 
the same for each $B \in \mathcal{B}_k\,$.

Summarising, we see that any homogeneous ultrametric space plus Laplacian are
completely determined by the degree sequence $(n_k)$ and the sequence $(C_k)$
which defines the homogeneous choice function.

Along with the choice function, also the eigenvalues of 
\eqref{eq:C1condition} depend only on $k$:
$$
\lambda(B) = \lambda_k \; \text{ for all }\; B \in \mathcal{B}_k\,, \quad\text{where}
\quad\lambda_k = \sum_{\ell \ge k} C_{\ell}\,.
$$
Each of them has infinite multiplicity in the pure point spectrum of $L_C\,$, 
see e.g \cite[Section 3.2]{BGPW}.
Regarding the intrinsic metric, recall that
$$
d(x,y) = r_k \iff d_*(x,y) = s_k\,,\quad \text{where}\quad s_k = 1/\lambda_k\,. 
$$
The volume function associated with the intrinsic metric is of course independent of 
$x \in X$, and given by
\begin{equation}\label{eq:V*}
V_*(s) = m\bigl(B_*(x,s)\bigr) = V(k) \quad \text{for}\quad s \in [s_k\,,\,s_{k+1})\,.
\end{equation}
The spectral function $N(x,\tau) = N(\tau)$ of \eqref{specfun} is also independent of
$x$, with $N(\tau) = 1/V_*(1/s)$. Thus, the formula \eqref{on-diagonal p} for the on-diagonal
heat kernel  becomes
\begin{equation}\label{eq:ondiag-HK}
\mathfrak{p}(t)=\int\limits_{0}^{\infty}\exp(-t\tau)\,dN(\tau)=
{\displaystyle\sum\limits_{k}}
e^{-t\lambda_k}\left(  \frac{1}{V(k-1)}-\frac{1}{V(k)}\right).
\end{equation}
Here and in the sequel, the summation ranges over $k \in \mathbb{Z}$ in Case 1, 
and over $k \in \mathbb{N}$ in Case 2.

\smallskip

The following plays an important role in the context of heat kernel estimates
of many types of Laplacians, not just on ultrametric spaces.
A monotone increasing function $F:\mathbb{R}_{+}\to \mathbb{R}_{+}$ is said 
to satisfy the \emph{doubling property} if there exists a constant $D>0$ such that
$F(2s)\le D\,F(s)$ for all $s>0$.

\begin{proposition}\label{N-doubling} The following properties are equivalent.
\begin{enumerate}
\item[(i)] The function $N(\tau)$ is doubling.

\item[(ii)] There are finite bounds $D$ and $K$ such that for all $k \in \mathbb{Z}$, 
resp $\in \mathbb{N}$, and all $\tau > 0$,
\[
n_{k}\leq D \quad \text{and}\quad \#\{l:\tau\leq\lambda_l\leq2\tau\}\leq K\,.
\]
\end{enumerate}
\end{proposition}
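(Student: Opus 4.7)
The plan is to translate the doubling inequality for $N$ into an equivalent reverse-doubling statement for the volume function $V_*$, after which both implications reduce to elementary bookkeeping with the piecewise constant structure from \eqref{eq:V*}.

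\emph{Step 1 (reformulation).} Since $N(\tau)=1/V_*(1/\tau)$, the change of variable $s=1/\tau$ shows that (i) is equivalent to
\[
V_*(s) \le D\,V_*(s/2) \quad\text{for all } s>0.
\]
Combined with $V_*(s)=V(k)$ on $[s_k,s_{k+1})$ and the identity $n_k=V(k)/V(k-1)$, this single inequality drives both directions.

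\emph{Step 2 ((i)$\Rightarrow$(ii)).} Apply reverse doubling at $s=s_k$: since $s_k/2<s_k$ one has $V_*(s_k/2)\le V(k-1)$, so $V(k)\le D\,V_*(s_k/2)\le D\,V(k-1)$, i.e.\ $n_k\le D$. For the count, fix $\tau>0$, set $s=1/\tau$, and let $\ell_1<\cdots<\ell_M$ enumerate the indices with $\lambda_{\ell_i}\in[\tau,2\tau]$, equivalently $s_{\ell_i}\in[s/2,s]$. Then $V(\ell_M)\le V_*(s)$ while $V_*(s/2)\le V(\ell_1-1)$ (because $s/2<s_{\ell_1}$), so reverse doubling gives
\[
\prod_{i=\ell_1}^{\ell_M} n_i \;=\; \frac{V(\ell_M)}{V(\ell_1-1)} \;\le\; D.
\]
Since every $n_i\ge 2$, this forces $M\le \log_2 D$.

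\emph{Step 3 ((ii)$\Rightarrow$(i)).} For $\tau>0$, let $k=k(\tau)$ and $j=k(2\tau)$ be the unique indices with $\tau\in(\lambda_{k+1},\lambda_k]$ and $2\tau\in(\lambda_{j+1},\lambda_j]$, so that $N(\tau)=1/V(k)$ and $N(2\tau)=1/V(j)$. Then
\[
\frac{N(2\tau)}{N(\tau)} \;=\; \frac{V(k)}{V(j)} \;=\; \prod_{i=j+1}^{k} n_i.
\]
Each $\lambda_i$ with $j+1\le i\le k$ lies in $[\lambda_k,\lambda_{j+1}]\subset[\tau,2\tau)$, so (ii) forces $k-j\le K$, while each $n_i\le D$; hence $N(2\tau)\le D^K N(\tau)$.

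The only genuine subtlety is in Step 2: tracking the boundary cases (whether $s_k/2$ or $s$ sits exactly at a jump of $V_*$, and whether $\lambda_\ell=\tau$ or $\lambda_\ell=2\tau$ is counted as ``interior''). These cost at most a multiplicative factor of $2$ in the constants and do not alter the equivalence; no deeper obstacle appears, since the whole argument is essentially bookkeeping once the reverse-doubling reformulation is in place.
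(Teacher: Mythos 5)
Your proof is correct and follows essentially the same route as the paper's: translate doubling of $N$ into doubling of $V_*$, get $n_k\le D$ by applying the doubling inequality across the jump at $s_k$, bound the count via $2^M\le \prod n_i \le D^2$, and conversely obtain $V_*(2s)\le D^{K}V_*(s)$ from the two bounds in (ii). The endpoint cases you flag (e.g.\ $\lambda_{\ell_1}=2\tau$, so $s_{\ell_1}=s/2$) are indeed harmless: using $n_{\ell_1}\le D$ they only turn $D$ into $D^2$ in the product bound, exactly the constant the paper itself ends up with.
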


\begin{proof}
(i)$\implies$(ii). Assume that $N(\tau)$ is doubling. Since by definition
$N(\tau)=1/V_{\ast}(1/\tau)$, the function $V_{\ast}(s)$ is doubling as well.
We use \eqref{eq:V*}. Choose
$s_{k}<s<s_{k+1}$ such that $2s>s_{k+1}$, then by the doubling property,%
\[
V_{\ast}(s_{k+1})\leq V_{\ast}(2s)\leq D\,V_{\ast}(s)=D\, V_{\ast}(s_{k}).
\]
It follows that%
$$ 
n_{k+1}=V_{\ast}(s_{k+1})/V_{\ast}(s_{k})\leq D.
$$
Let $\tau >0$ and set $s = 1/(2\tau)$. Then there are $k$ and $r$ such that 
$s_{k}\leq s<s_{k+1}$ and $s_{k+r}\leq2s<s_{k+r+1}\,$. We claim that
\[
2^{r}\leq n_{k+1}...n_{k+r}\leq D^{2}.
\]
Indeed, this  is true if $r=1.$ Assuming that $r\geq2$ we obtain%
$$
2^{r}  \leq n_{k+1}...n_{k+r}\leq 
D\, \frac{V_{\ast}(s_{k+2})}{V_{\ast}(s_{k+1})}\cdots
\frac{V_{\ast}(s_{k+r})}{V_{\ast}(s_{k+r-1})} 
= D\,\frac{V_{\ast}(2s)}{V_{\ast}(s)} \leq D^{2}\,.
$$
It follows that%
$$
\#\{l:\tau\leq\lambda_l\leq2\tau\}=\#\{l:s\leq s_{l}\leq2s\}
\leq r+1\leq\frac{2\ln D}{\ln2}+1.
$$

\noindent
(ii)$\implies$(i). Assume that $n_{k}\leq D$ for  all
$k\in \mathbb{Z}$, resp. $k\in \mathbb{N}$. For any $s>0,$ let $m(s)= \#\{l:s\leq s_{l}\leq2s\}$. 
By assumption $m(s)\leq K$ for all $s>0$. We have%
\[
V_{\ast}(2s)\leq D^{m(s)}\,V_{\ast}(s)\leq D^{K}\,V_{\ast}(s),
\]
whence $V_{\ast}(\tau)$ is doubling. Since $N(\tau)=1/V_{\ast}(1/\tau)$, the
function $N(\tau)$ is doubling as well.
\end{proof}

\begin{remark}\rm
Recall that  $\mathfrak{p}(t)$ is the Laplace transform of the function
$N(\tau)$. It follows that for all $t > 0$
\begin{equation}\label{eq:DP}
c\, N(1/t) \le \mathfrak{p}(t) \le c'\, N(1/t) \quad \text{for some}\; c, c' > 0
\end{equation}
if and only if the function $N(\tau)$ is doubling; see \cite[Theorem 2.14 \& Lemma 2.21]{BGPW}. 
(This holds also in the non-homogeneopus case.) 

When the function $N(\tau)$ is not doubling, one can state only that setting
$M(\tau) = -\log N(\tau)$, we have for $t \to \infty$
\begin{equation}\label{Laplace asymptotics}%
\log\frac{1}{\mathfrak{p}(t)} \sim M^{\bigstar}(t)\,,
\quad \text{where}\quad
M^{\bigstar}(t) = \inf\{t\tau +M(\tau) : \tau > 0\}
\end{equation}
is the Legendre transform of the function $M(\tau)$. The papers \cite{BenBob} and
\cite{BenBobPit2013} contain many computations based on 
\eqref{Laplace asymptotics}. \hfill\rule{0.5em}{0.5em}
\end{remark}

Following \cite[Section 2.2.2]{BinGolTeu}, a function $F(t)>0$ is of \emph{finite order} 
$\rho>0$~if
\begin{equation}\label{def-finite order}%
\lim_{t\to\infty}\frac{\log F(t)}{\log t}=\rho. 
\end{equation}
Any function of the form $F(t) = a(t)\,t^{\rho}\, \exp\bigl(\log(1+t)\bigr)^{\varepsilon}$,
where $0<\varepsilon<1$ and $a(t)$ is strictly positive and bounded, is of
finite order $\rho$. Two examples related to the heat kernel $\mathfrak{p}(t)$ will be
presented in Sections \ref{sec:derivative} and \ref{sec:rw} of this paper.

\begin{proposition}\label{pro:order} 
The following statements are equivalent.
\begin{enumerate}
\item[(a)] One of the functions $1/\mathfrak{p}(t)\,,\;1/N(1/t)¸\,,\;V_{\ast}(\tau)$ 
is of finite order $\rho.$
\item[(b)] Each of the functions $1/\mathfrak{p}(t)\,,\;1/N(1/t)¸\,,\;V_{\ast}(\tau)$ is of
finite order $\rho.$
\item[(c)] $\;\log V(k)\sim \rho\, \log(1/\lambda_k)\,\;$ as $k \to \infty\,$.\footnote[1]{
Throughout
this paper, $\sim$ denotes asymptotic equivalence, i.e., quotients tend to~$1$.}
\end{enumerate}
\end{proposition}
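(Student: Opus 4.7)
The plan is to split the chain (a)$\Leftrightarrow$(b)$\Leftrightarrow$(c) into two pieces linked through the step-function shape of $V_*$. The starting observation is the tautology $1/N(1/t) = V_*(t)$ coming directly from \eqref{specfun}: the first two functions listed in (a) and (b) are literally equal, so the substantive content reduces to showing that (c) is equivalent to $V_*$ being of finite order $\rho$, and that this is in turn equivalent to $1/\mathfrak{p}$ being of finite order~$\rho$.

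\textbf{Step 1: (c) $\Leftrightarrow$ $V_*$ of finite order $\rho$.} By \eqref{eq:V*}, $V_*(t) = V(k)$ on $[s_k, s_{k+1})$ with $s_k = 1/\lambda_k$. Evaluating at $t = s_k$ gives the forward direction immediately. For the converse, for $t \in [s_k, s_{k+1})$ I would use the sandwich
$$
\frac{\log V(k)}{\log s_{k+1}} \;\le\; \frac{\log V_*(t)}{\log t} \;\le\; \frac{\log V(k)}{\log s_k}.
$$
The upper end tends to $\rho$ by (c) applied at $k$; for the lower end, decompose $\log V(k) = \log V(k+1) - \log n_{k+1}$ and apply (c) at $k+1$, exploiting in the homogeneous setting that the single degree factor $\log n_{k+1}$ is of lower order than the cumulative $\log s_{k+1}$, so that both ends of the sandwich converge to the same limit $\rho$.

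\textbf{Step 2: $V_*$ of finite order $\rho$ $\Leftrightarrow$ $1/\mathfrak{p}$ of finite order $\rho$.} This is the Laplace/Legendre passage, for which I would rely on the asymptotic formula \eqref{Laplace asymptotics} recorded in the preceding remark: $\log(1/\mathfrak{p}(t)) \sim M^{\bigstar}(t)$ with $M(\tau) = \log V_*(1/\tau)$. If $\log V_*(s)/\log s \to \rho$, then $M(\tau) \sim -\rho \log \tau$ as $\tau \to 0^+$, and the infimum defining $M^{\bigstar}(t)$ is attained near $\tau = \rho/t$; a short optimisation yields $M^{\bigstar}(t) \sim \rho \log t$, so $1/\mathfrak{p}$ has finite order $\rho$. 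For the converse I would combine the elementary lower bound $\mathfrak{p}(t) \ge e^{-1}\,N(1/t)$, obtained by retaining only the contribution of $\tau \le 1/t$ in \eqref{on-diagonal p}, with an inversion of the Legendre computation on the extremal subsequence: the hypothesis $M^{\bigstar}(t) \sim \rho \log t$ forces $M(\tau) \sim -\rho \log \tau$ at least along the minimisers $\tau \asymp 1/t$, which via Step 1 is enough to yield (c).

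\textbf{Main obstacle.} The hardest point is the converse in Step 2. The Laplace/Legendre transform smooths $N$, and in principle a fine logarithmic-order behaviour of $V_*$ could be washed out. This is precisely why the argument must invoke \eqref{Laplace asymptotics} from \cite{BenBob}, \cite{BenBobPit2013}, rather than rely on the cleaner doubling-regime two-sided comparison \eqref{eq:DP}, which fails in the non-doubling case relevant for the oscillating heat kernels studied later in the paper.
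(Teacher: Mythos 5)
Your architecture mirrors the paper's (identify $1/N(1/t)=V_*(t)$, reduce to the values along $s_k=1/\lambda_k$, and pass between $V_*$ and $\mathfrak{p}$ by the Laplace--Legendre asymptotics \eqref{Laplace asymptotics}), but the two steps you flag as delicate are exactly the ones your sketch does not close. In Step 2, the Tauberian converse is the real content of the proposition, and ``the hypothesis forces $M(\tau)\sim-\rho\log\tau$ at least along the minimisers'' is an assertion, not an argument. From $\log\bigl(1/\mathfrak{p}(t)\bigr)\sim M^{\bigstar}(t)$ one does get pointwise \emph{lower} bounds on $M$ (your elementary bound --- which, incidentally, comes from retaining the range $\tau\ge 1/t$ in \eqref{on-diagonal p} and using that $N$ is nondecreasing, not $\tau\le 1/t$ --- or, as in the paper, by plugging $\tau=1/t$ into $t\tau+M(\tau)\ge M^{\bigstar}(t)$). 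But the needed \emph{upper} bound cannot be read off along minimisers: the Legendre transform only sees the convex minorant of $M$, the infimum in $M^{\bigstar}(t)$ is attained at jump points of the step function $M$ lying on that convex profile, and these may skip infinitely many of the $\lambda_k$; asymptotics of $M$ along such a subsequence yields neither (c) at every $k$ nor finite order of $V_*$ at every $t$. This is precisely the hole the paper fills with its minorant device: for \emph{every} continuous strictly decreasing $\widetilde M\le M$ with $\widetilde M(0+)=\infty$, the crossing point $t\tau_t=\widetilde M(\tau_t)$ satisfies $t\tau_t\le\widetilde M^{\bigstar}(t)\le M^{\bigstar}(t)\le(\rho+\varepsilon)\log t$, whence $\widetilde M\bigl(t^{-(1-\varepsilon)}\bigr)\le(\rho+\varepsilon)\log t$ for all large $t$ with a threshold independent of $\widetilde M$; taking the supremum over such minorants transfers the bound to $M$ itself and gives $\limsup\log V_*(\tau)/\log\tau\le\frac{\rho+\varepsilon}{1-\varepsilon}$. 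Without this (or an equivalent) mechanism your Step 2 converse does not prove anything beyond the easy liminf inequality.

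The converse in your Step 1 is also unjustified as written: homogeneity imposes no relation between the single degree $n_{k+1}$ and $s_{k+1}=1/\lambda_{k+1}$, since the degree sequence and the eigenvalue sequence are independent data, and one factor $\log n_{k+1}$ may carry a fixed positive proportion of $\log V(k+1)$. For instance, take $n_j\approx e^{2^{j-1}}$, so that $\log V(k)=2^k+O(1)$, and set $\lambda_k=e^{-2^k}$: then (c) holds with $\rho=1$, yet for $t\uparrow s_{k+1}$ one has $\log V_*(t)/\log t\to 1/2$, so $\log n_{k+1}$ is \emph{not} of lower order than $\log s_{k+1}$ and the two ends of your sandwich do not meet. (The paper is admittedly terse here --- its displayed identity only compares the two quantities along $t=s_k$, i.e.\ it really gives (b)$\Rightarrow$(c) --- but your explicit reason for the interpolation is false in general; a correct treatment of (c)$\Rightarrow$(b) needs an extra hypothesis of the type $\log n_{k+1}=o\bigl(\log(1/\lambda_k)\bigr)$, which holds in the paper's examples, or a reading of ``finite order'' through upper limits.)
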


\begin{proof}
$(a)\Rightarrow(b).\;$  The Abelian part of the statement%
\[
\mathtt{order}\bigl(1/N(1/t)\bigr)) \; =\; \rho
\Rightarrow\mathtt{order}\bigl(1/\mathfrak{p}(t)\bigr)=\rho
\]
follows by a standard argument from the Laplace transform analysis. Thus, what is
left is the the Tauberian part of the statement, that is, the converse implication. 
Set again $M(\tau)=\log(1/N(\tau)).$ By \eqref{def-finite order} and
\eqref{Laplace asymptotics}, for any given $\varepsilon>0$ there exists $T>0$
such that for all $t\geq T,$%
$$
t\tau+M(\tau)  \geq \inf \{t\tau +M(\tau) : \tau > 0\} = M^{\bigstar}(t)
\geq(\rho-\varepsilon)\log t.
$$
In particular, choosing $\tau=1/t$ we obtain%
\[
M(1/t)\geq(\rho-\varepsilon)\, \log t - 1\sim(\rho-\varepsilon)\, \log t.
\]
It follows that
\begin{equation}
\liminf_{t\to\infty}\frac{\log\bigl(1/N(1/t)\bigr)}{\log t}\geq\rho-\varepsilon.
\label{liminfN}%
\end{equation}

Let $\widetilde{M}\leq M$ be any continuous strictly decreasing function 
$\mathbb{R}_+ \to \mathbb{R}_+$ with $\widetilde{M}(0+)=\infty.$ There exists 
$\widetilde T>0$ such that for all
$t\geq \widetilde T$,%
\begin{equation}\label{Legendre M-tilde}%
\widetilde{M}^{\bigstar}(t) \le M^{\bigstar}(t) \leq(\rho+\varepsilon)\log t. 
\end{equation}
For any given $t>0$
there exists a unique $\tau_{t}$ such that $t\, \tau_{t}=\widetilde{M}(\tau_{t})$,
whence
$$
\widetilde{M}^{\bigstar}(t) \ge \min \Bigl\{ \max \bigl\{ t\, \tau\,,\,\widetilde{M}(\tau)\bigr\} 
: \tau > 0 \Bigr\}
= t\,\tau_t\,. 
$$
This together with \eqref{Legendre M-tilde} implies that%
$$
t \, \tau_t = \widetilde{M}(\tau_t) \leq(\rho+\varepsilon)\,\log t\,. 
$$
In turn, this implies that $\tau_{t}\to 0$ as $t \to \infty$, and that for sufficiently
large $t$,
$$
(\rho+\varepsilon)\,\log t  \geq\widetilde{M}\left(  \frac{\rho+\varepsilon}{t}\,\log t\right)
\geq\widetilde{M}\left(  \frac{1}{t^{1-\varepsilon}}\right)\,,
$$
whence, setting $\tau:=t^{1-\varepsilon},$ we obtain%
\[
\widetilde{M}\left(  \frac{1}{\tau}\right)  
\leq\frac{\rho+\varepsilon}{1-\varepsilon}\,\log\tau\,.
\]
As $\widetilde{M}$ was chosen to be any continuous strictly decreasing
function satisfying $\widetilde{M}\leq M,$ it follows that%
$$
\limsup_{\tau\to\infty}\frac{\log\bigl(1/N(1/\tau)\bigr)}{\log\tau}
\leq\frac{\rho+\varepsilon}{1-\varepsilon}. 
$$
This holds for arbitrarily small $\varepsilon>0$, and together with 
\eqref{liminfN}, it leads to the desired result.

\smallskip

\noindent
$(b)\Leftrightarrow(c).\;$ By definition, $V_{\ast}(1/\lambda_k)=V(k),$ whence%
\[
\frac{V_{\ast}(1/\lambda_k)}{\log( 1/\lambda_k)}
=\frac{\log V(k)}{\log( 1/\lambda_k)  }.
\]
Since $\lambda_k\to 0$ as $k \to \infty\,$, the equivalence of (b) and (c) follows.
\end{proof}

\smallskip

At last, recall that a positive function $F(t)$ varies regularly of index $\alpha$ if
\[
\lim_{t\to\infty}\frac{F(\kappa t)}{F(t)}=\kappa^{\alpha},
\]
for all $\kappa\geq 1$. For example, each of the functions 
$t \mapsto t^{\alpha}\,$,  $\;t^{\alpha}\,(\log t)^{\beta}\,$, 
$\;t^{\alpha}\,(\log t)^{\beta}(\log\log t)^{\gamma}$ varies regularly of index $\alpha$,
whereas $t \mapsto (2+\sin t)\,t^{\alpha}$ does not
vary regularly. See \cite{BinGolTeu}.

\begin{proposition}
\label{regular variation} None of the functions $1/\mathfrak{p}(t),$ $1/N(1/t),$
$V_{\ast}(t)$ varies regularly.
\end{proposition}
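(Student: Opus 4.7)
The plan is to reduce all three assertions to the single claim that $V_*$ does not vary regularly. The identity $N(\tau)=1/V_*(1/\tau)$ from \eqref{specfun} and \eqref{eq:V*} already shows that the functions $1/N(1/t)$ and $V_*(t)$ coincide, disposing of two cases at once. For the transfer from $\mathfrak{p}$, I invoke Karamata's Tauberian theorem (\cite[Thm.~1.7.1]{BinGolTeu}): since $\mathfrak{p}(t)=\int_0^\infty e^{-t\tau}\,dN(\tau)$ is the Laplace--Stieltjes transform of the non-decreasing function $N$, regular variation of $\mathfrak{p}(t)$ at infinity with index $-\rho$ is equivalent to regular variation of $N(\tau)$ at $0^+$ of index $\rho\ge 0$, and hence of $V_*(t)$ at infinity of index $\rho$. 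Monotonicity of $1/\mathfrak{p}(t)$ (which follows from complete monotonicity of $\mathfrak{p}$) forces the relevant index to be non-negative, matching the case where the standard Tauberian theorem applies.

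Having made this reduction, I argue by contradiction. Suppose $V_*$ is regularly varying of some index $\rho\ge 0$; monotonicity of $V_*$ indeed forces $\rho\ge 0$. The uniform convergence theorem for regularly varying functions (\cite[Thm.~1.2.1]{BinGolTeu}) gives
\begin{equation*}
V_*(\kappa t)/V_*(t)\longrightarrow \kappa^\rho \quad \text{uniformly in } \kappa \text{ on compact subsets of }(0,\infty).
\end{equation*}
Since $\kappa^\rho\to 1$ as $\kappa\to 1^+$, this produces $\delta>0$ and $T>0$ with
\begin{equation*}
V_*(\kappa t)/V_*(t)<\tfrac{3}{2}\quad\text{for all } t\ge T,\ \kappa\in[1,1+\delta].
\end{equation*}

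The contradiction then comes from the step-function jumps of $V_*$. By \eqref{eq:V*}, $V_*$ is right-continuous with jumps exactly at the points $s_k$, with jump ratio $V_*(s_k)/V_*(s_k^-)=V(k)/V(k-1)=n_k\ge 2$. Choose $k$ with $s_k>T$, pick $\epsilon>0$ small enough that $s_k/(s_k-\epsilon)\le 1+\delta$, and set $t:=s_k-\epsilon$ and $\kappa:=s_k/t$. Then $t\ge T$, $\kappa\in[1,1+\delta]$, yet $V_*(\kappa t)/V_*(t)=n_k\ge 2$, contradicting the preceding bound. The main obstacle I anticipate is the Tauberian step itself: the Abelian direction is essentially immediate from the Laplace transform, whereas going back from regular variation of $\mathfrak{p}$ to that of $N$ requires care with the exact form of Karamata's theorem and with the restriction to non-negative indices. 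Once that analytic input is secured, the step-function contradiction is automatic from the uniform lower bound $n_k\ge 2$.
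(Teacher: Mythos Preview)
Your proof is correct and follows essentially the same approach as the paper: reduce via Karamata's theory and the identity $V_*(t)=1/N(1/t)$ to ruling out regular variation of $V_*$, then derive a contradiction from the fact that $V_*$ has jump ratios $n_k\ge 2$ at the points $s_k\to\infty$. The only cosmetic difference is that you invoke the uniform convergence theorem to control $V_*(\kappa t)/V_*(t)$ on an interval of $\kappa$, whereas the paper uses the pointwise limit at a single $\kappa=1+\varepsilon$ and lets $k\to\infty$; both arguments exploit the same obstruction.
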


\begin{proof}
By Karamata's theory, the functions $1/\mathfrak{p}(t)$ and 
$1/N(1/t)$ vary regularly simultaneously. Since $V_{\ast}(t)=1/N(1/t)$, this is true 
also for the functions
$1/\mathfrak{p}(t)$ and $V_{\ast}(t).$ 

Now assume by contradiction
that the function $V_{\ast}(t)$ is regularly varying of index $\alpha$,
which evidently must  be $\geq0$, because $V_{\ast}(t)$ is increasing. 
With our notation $s_{k}=1/\lambda_k\,$, 
choose $\varepsilon>0$ and set $a=s_k-\varepsilon$ and $b=s_k+\varepsilon$. Since 
$V_{\ast}(s_{k})=V(k),$ we have $V_{\ast}(a)\leq V(k-1)$ and $V_{\ast}(b)\geq
V(k)$. As $s_{k}\rightarrow\infty$ and $\varepsilon$ is fixed,
$s_{k}+\varepsilon<(1+\varepsilon)(s_{k}-\varepsilon)$ for large enough $k$,
whence as $k \to \infty$,
$$
2   \leq n_{k}=\frac{V(k)}{V(k-1)} \leq \frac{V_{\ast}(b)}{V_{\ast}(a)}
\leq\frac{V_{\ast}\bigl((1+\varepsilon)(s_{k}-\varepsilon)\bigr)}
{V_{\ast}(s_{k}-\varepsilon)}\rightarrow(1+\varepsilon)^{\alpha}.
$$
If $\varepsilon$ is chosen small enough, this yields the 
contradiction we were looking for. \hspace*{.5cm}
\end{proof}

\section{The operator of fractional derivative $\mathfrak{D}^{\alpha}$}\label{sec:derivative}

\setcounter{equation}{0}

Let us for a moment return to the general setting of a not necessarily
homogeneous  ultrametric measure space $(X,d,m)$ and an associated
hierarchical Laplacian $L_C$ of which we may assume without loss of
generality that it is the standard Laplacian according to \eqref{standard1} 
and \eqref{standard2}. Otherwise, we just replace $d$ by the resulting instrinsic 
ultrametric \eqref{eq:intrinsic}.

Now take  $\alpha > 0$ to introduce the new choice function 
\begin{equation}\label{C-B-fractur}
C_{\alpha}(B)=\left(  \frac{1}{\mathsf{diam}(B)}\right)^{\alpha}-
\left(  \frac{1}{\mathsf{diam}(B^{\prime})}\right)^{\alpha}, \quad B \in \mathcal{B}.
\end{equation}
We denote the resulting operator by $L_C^{^{\scriptstyle \alpha}}$.
The corresponding eigenvalues are 
$$
\lambda^{\alpha}(B)=\left(  \frac{1}{\mathsf{diam}(B)} \right)
^{\alpha},
$$
and the associated intrinsic ultrametric is $d^{\alpha}$. 

The space $\mathcal{F}$ is the linear span of all $\mathbf{1}_{B}\,$, $B \in \mathcal{B}$.
We can expand 
$$
\frac{\mathbf{1}_{B}}{m(B)} = \sum_{D \in \mathcal{B}: D \supseteq B} f_D\,,
$$
a pointwise and $\mathcal{L}^2(X,m)$-convergent series. Thus 
$$
L_C^{\alpha}\mathbf{1}_{B} = m(B) \sum_{D \in \mathcal{B}: D \supseteq B} 
\lambda^{\alpha}(D) f_D\,.
$$
From this, we can infer the following.

\begin{lemma}\label{lem:alphabeta}
For any $\alpha, \beta > 0$,
$$
L_C^{\beta}:\mathcal{F} \to \mathsf{Dom}(L_C^{\alpha}) \quad\text{with}\quad 
L_C^{\alpha}\circ L_C^{\beta}=L_C^{\alpha+\beta} \quad \text{and} \quad
(L_C^{\alpha})^{\beta}=L_C^{\alpha\beta}.
$$
\end{lemma}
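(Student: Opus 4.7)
The plan is to exploit the fact that all the operators $L_C^{\gamma}$ ($\gamma>0$) are simultaneously diagonalised in the common eigenbasis $\{f_B:B\in\mathcal{B}\}$, with respective eigenvalues $\lambda(B')^{\gamma}$. This reduces both identities to spectral arithmetic. As a starting point, I would take the displayed formula shown immediately before the lemma,
$$
L_C^{\alpha}\mathbf{1}_B = m(B)\sum_{D\in\mathcal{B}\,:\,D\supseteq B}\lambda^{\alpha}(D)\,f_D,
$$
the series converging both pointwise and in $\mathcal{L}^2(X,m)$, and then pass from $\mathbf{1}_B$ to all of $\mathcal{F}$ by linearity, since $\mathcal{F}$ is the linear span of the indicators $\mathbf{1}_B$, $B\in\mathcal{B}$.

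For the first assertion, the argument proceeds in two sub-steps. First, I would observe that since each $f_D$ is an eigenfunction of $L_C^{\beta}$ with eigenvalue $\lambda(D)^{\beta}$, the candidate identity reads, termwise,
$$
L_C^{\beta}\bigl(L_C^{\alpha}\mathbf{1}_B\bigr) = m(B)\sum_{D\supseteq B}\lambda^{\alpha}(D)\,\lambda^{\beta}(D)\,f_D = m(B)\sum_{D\supseteq B}\lambda^{\alpha+\beta}(D)\,f_D = L_C^{\alpha+\beta}\mathbf{1}_B.
$$
To make this rigorous, I would check that the coefficient sequence $\bigl(m(B)\lambda^{\alpha}(D)\bigr)_{D\supseteq B}$ weighted by the further factor $\lambda^{\beta}(D)$ is square-summable against $\|f_D\|^2$, which is exactly the condition $\mathbf{1}_B\in\mathsf{Dom}(L_C^{\alpha+\beta})$ coming from the general spectral theory of $L_C^{\gamma}$ recalled in the introduction. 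This simultaneously yields the inclusion $L_C^{\beta}\mathcal{F}\subseteq\mathsf{Dom}(L_C^{\alpha})$ and the equality $L_C^{\alpha}\circ L_C^{\beta}=L_C^{\alpha+\beta}$ on $\mathcal{F}$.

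For the power identity $(L_C^{\alpha})^{\beta}=L_C^{\alpha\beta}$, where the left-hand side is understood via the Borel functional calculus of the non-negative self-adjoint operator $L_C^{\alpha}$, I would argue as follows. The spectral representation of $L_C^{\alpha}$ on the complete orthogonal system $\{f_B\}$ with eigenvalues $\lambda(B')^{\alpha}$ shows that $(L_C^{\alpha})^{\beta}$ has the same eigenvectors with eigenvalues $\bigl(\lambda(B')^{\alpha}\bigr)^{\beta}=\lambda(B')^{\alpha\beta}$, and this matches the spectral data of $L_C^{\alpha\beta}$ exactly. Since $(L_C^{\alpha\beta},\mathcal{F})$ is essentially self-adjoint and both self-adjoint operators act identically on the complete orthogonal set $\{f_B\}$, the uniqueness of the spectral resolution forces $(L_C^{\alpha})^{\beta}=L_C^{\alpha\beta}$.

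The main obstacle I anticipate is not conceptual but one of bookkeeping: one must ensure that the termwise action of $L_C^{\beta}$ on the $\mathcal{L}^2$-convergent eigenfunction expansion of $L_C^{\alpha}\mathbf{1}_B$ is genuinely the spectral action, rather than only the pointwise formula \eqref{eq:hlaplacian}. This is clean precisely because $\{f_B\}$ is a \emph{common} eigenbasis for the whole family $\{L_C^{\gamma}\}_{\gamma>0}$, so convergence in the graph norm of any $L_C^{\gamma}$ is equivalent to the appropriate weighted $\ell^2$-condition on the spectral coefficients and can be verified uniformly across the powers involved.
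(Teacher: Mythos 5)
Your proof is correct and matches the route the paper intends: the lemma is stated right after the eigenfunction expansion with only the remark ``From this, we can infer the following,'' so the intended argument is exactly the simultaneous diagonalisation on the common orthogonal system $\{f_B\}$ that you spell out, together with the domain check and the appeal to essential self-adjointness of $(L_C^{\gamma},\mathcal{F})$ to identify $(L_C^\alpha)^\beta$ with $L_C^{\alpha\beta}$. One cosmetic remark: the eigenvalue of $L_C^\gamma$ on $f_D$ is $\lambda^\gamma(D')=\lambda(D')^\gamma$ (depending on the predecessor $D'$, as recalled after \eqref{eigenfunction}), so the coefficient appearing in the expansion of $L_C^\gamma\mathbf{1}_B$ should carry $D'$ rather than $D$; this does not affect any of your estimates or conclusions, since the index set is still $\{D\supseteq B\}$ and only the decay of the scalar matters.
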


Now let us briefly recall the construction of $\mathbb{Q}_p$ for arbitrary $p \ge 2$:
any non-zero element has the form
$$
x= \sum_{n=-\infty}^{\infty} a_n\,p^n \,, \;
a_n \in \{ 0, \dots, p-1\}\,, \; \exists  k: a_k \ne 0 
\;\text{and}\; a_n = 0\;\forall\ n < k.
$$
For such $x$, its $p$-adic (pseudo)norm is $\|x\|_p = p^{-k}$. The element $0$ is represented
by the series with all $a_n=0$, and $\|x\|_p =0$. In $\mathbb{Q}_p\,$, we have addition
with carries, so that it extends the analogous operation for finite sums (i.e., when 
only finititely many $a_n$ are $\ne 0$). We also have multiplication in the same
sense, so that we get a ring with unit $1$ (where $a_0=1$ and $a_n=0$ for $n \ne 0$).
The standard $p$-adic ultrametric is $\| x-y\|_p\,$. 

We also have $\| x \cdot y \|_p \le \|x\|_p\|y\|_p\,$, with equality when $p$ is prime,
and in this case, $\mathbb{Q}_p$ is of course a field.
The ring of \emph{$p$-adic integers} is 
$$
\mathbb{Z}_p = \{ x \in \mathbb{Q}_p :  \|x\|_p \le 1 \}\,.
$$
The set of non-zero distances is $\{ r_k = p^k : k \in \mathbb{Z}\}$, and the closed
ball of radius $p^k$ around $x$ is
$$
B(x,p^k) = x + p^{-k}\mathbb{Z}_p\,.
$$
Our reference measure $m$ is Haar measure of the totally disconnected, locally compact
abelian group $(\mathbb{Q}_p\,,\,+)$, nomalised such that $m(\mathbb{Z}_p) = 1$. Thus,
$$
V(k)= m\bigl(B(x,p^k)\bigr) = p^k\,.
$$
The degree sequence $(n_k)$ is constant, $n_k \equiv p$. Good references on general $p$-adic
analysis (for prime $p$) are Katok \cite{Katok} or Koblitz \cite{Koblitz}. 

\smallskip

We now take the metric $d(x,y)=\|x-y\|_p/p$ and write $\mathfrak{D} = \mathfrak{D}_p$ 
for the standard hierarchical Laplacian associated with $(\mathbb{Q}_p, d, m)$. 
Then we consider the homogeneous operator $\mathfrak{D}^{\alpha}=\mathfrak{D}^{\alpha}_p$, 
$\alpha > 0$, according to
the construction outlined at the beginning of this section. This is the
operator of \emph{$p$-adic fractional derivative} of order $\alpha$ in the terminology
of \cite{Vladimirov}, \cite{VladimirovVolovich}, \cite{Vladimirov94}. 
For $f\in\mathcal{F}$ it can be written in the form
$$
\mathfrak{D}^{\alpha}f(x)=\frac{p^{\alpha}-1}{1-p^{-\alpha-1}}
\int\limits_{\mathbb{Q}_{p}}
\frac{f(x)-f(y)}{\| x-y\|_{p}^{\alpha+1}}\,dm(y).
$$
In terms of the Fourier transform,
$$
\widehat{\mathfrak{D}^{\alpha}f}(\xi)=\| \xi\|_{p}^{\alpha}\,
\widehat{f}(\xi)\,,\quad \xi\in\mathbb{Q}_{p}\,,
$$
which means that $\mathfrak{D}^{\alpha}$ is the spectral multiplier of 
order $\alpha\,$; compare with \cite[\S 5.1]{BGPW} in the present setting. 
(This property also explains why the standard $p$-adic
metric is divided by $p$ here). In that form, it was first introduced
and studied in \cite{Taibleson75} on $\mathbb{Q}_{p}^d$ for arbitrary
$d \ge 1$. For this reason, it is sometimes also called the \emph{Taibleson Laplacian.}

The eigenvalues of $\mathfrak{D}^{\alpha}$ have the form
\begin{equation}
\lambda_k= \left(  \frac{c}{V(k)}\right)^{\alpha} =  \left(  \frac{c}{p^k}\right)^{\alpha}\,,
 \label{lambda=V}%
\end{equation}
with $c =p$. The value $c=1$ is equally reasonable, and for our computations,
the choice is not significant. 
The associated on-diagonal heat kernel with generic $c > 0$ is denoted  
$\mathfrak{p}_{\alpha}^{(c)}(t)$.
In the special case of $\mathfrak{D}^{\alpha}$, we just write
$\mathfrak{p}_{\alpha}(t)$.

\begin{theorem} \label{thm:p-adic}
There exists a non-constant strictly positive
continuous $\alpha$-periodic function $\mathcal{A}(\tau) = \mathcal{A}_p(\tau)$ 
such that on $\mathbb{Q}_p\,$,
$$
\mathfrak{p}_{\alpha}(t)=t^{-1/\alpha}\mathcal{A}(\log_{p}t).
$$
For fixed $\alpha > 0$, as $p$ tends to infinity,
$$
\min\mathcal{A}_{p}\sim p^{-1}(\log p)^{1/\alpha} \quad\text{and}\quad 
\max\mathcal{A}_{p}\to (e\alpha)^{-1/\alpha}\,.
$$
For general $c$ in \eqref{lambda=V}, the function $\mathcal{A}(\tau)$ has to be replaced
by  
$$
(p/c)\,\mathcal{A}\bigl(\tau+ \alpha \log_p(c/p)\bigr)\,.
$$
\end{theorem}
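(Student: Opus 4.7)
The plan is to plug the specific data $V(k)=p^k$ and $\lambda_k=p^{\alpha(1-k)}$ into the general formula~\eqref{eq:ondiag-HK}, extract the periodic factor by a direct scaling identity, and then perform a Laplace-type analysis of the resulting series as $p\to\infty$. Inserting the data gives
\[
\mathfrak{p}_\alpha(t)=(p-1)\sum_{k\in\mathbb{Z}} p^{-k}\exp\bigl(-tp^{\alpha(1-k)}\bigr).
\]
Replacing $t$ by $tp^\alpha$ shifts the summation index $k\mapsto k-1$ and produces an extra factor $p^{-1}$, so $\mathfrak{p}_\alpha(tp^\alpha)=p^{-1}\mathfrak{p}_\alpha(t)$. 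Hence $t^{1/\alpha}\mathfrak{p}_\alpha(t)$ is invariant under $\log_p t\mapsto \log_p t+\alpha$, and setting $\mathcal{A}(\tau):=p^{\tau/\alpha}\mathfrak{p}_\alpha(p^\tau)$ yields the claimed representation. Positivity and continuity are evident from the uniformly convergent series, while non-constancy is immediate from Proposition~\ref{regular variation}: otherwise $\mathfrak{p}_\alpha(t)$ would be regularly varying. The general-$c$ assertion follows from the rescaling $\mathfrak{p}_\alpha^{(c)}(t)=\mathfrak{p}_\alpha\bigl(t(c/p)^\alpha\bigr)$, a direct consequence of $\lambda_k^{(c)}=(c/p)^\alpha\lambda_k^{(p)}$.

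For the asymptotics as $p\to\infty$, I rewrite
\[
\mathcal{A}(\tau)=(1-1/p)\sum_{j\in\mathbb{Z}}\phi(u_j),\qquad \phi(u)=u^{1/\alpha}e^{-u},\qquad u_j=p^{\tau-\alpha j}.
\]
The function $\phi$ attains its unique global maximum $(e\alpha)^{-1/\alpha}$ at $u=1/\alpha$. For large $p$ the points $u_j$ form a geometric progression with huge ratio $p^\alpha$, so at most one $u_j$ lies in any bounded region. Choosing $\tau\equiv -\log_p\alpha\pmod\alpha$ aligns some $u_{j_0}$ exactly with the peak, giving the contribution $(e\alpha)^{-1/\alpha}$; the terms $j>j_0$ form a geometric tail of total mass $O(1/p)$, while the terms $j<j_0$ decay super-exponentially. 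Combined with the pointwise bound $\phi\le (e\alpha)^{-1/\alpha}$, this yields $\max\mathcal{A}_p\to(e\alpha)^{-1/\alpha}$.

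The minimum is more delicate. Writing $\tau=-\log_p\alpha+\delta$ with $\delta\in[0,\alpha)$, the negative-$j$ terms stay super-exponentially small and the leading contribution reduces to
\[
F(\delta):=\phi(u_0)+\sum_{j\ge1}\phi(u_j)\approx \alpha^{-1/\alpha}p^{\delta/\alpha}\left[e^{-p^\delta/\alpha}+\frac{1}{p-1}\right],
\]
since $u_j\ll 1/\alpha$ for $j\ge 1$ gives $\phi(u_j)\approx u_j^{1/\alpha}$ and produces a geometric series. Setting $F'(\delta)=0$ reduces to the transcendental equation $e^{-p^\delta/\alpha}(p^\delta-1)(p-1)=1$; its relevant solution $\delta_*$ satisfies $p^{\delta_*}=\alpha(1+\delta_*)\log p$, whence $\delta_*=\log_p(\alpha\log p)+o(1)\to 0$. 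Substituting back, $e^{-p^{\delta_*}/\alpha}=p^{-(1+\delta_*)}\sim 1/(p\alpha\log p)$ is of lower order than $1/(p-1)\sim 1/p$, so
\[
F(\delta_*)\sim \alpha^{-1/\alpha}p^{\delta_*/\alpha}\cdot\frac{1}{p-1}=\frac{(\log p)^{1/\alpha}}{p}\bigl(1+o(1)\bigr).
\]
That $\delta_*$ is the global minimiser follows because $F$ tends to $(e\alpha)^{-1/\alpha}$ at both endpoints of the period (by periodicity together with a one-step index shift) while $F(\delta_*)\to 0$ as $p\to\infty$.

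The main obstacle is the minimum asymptotic. One must resist the naive guess $\delta=\alpha/2$ (which gives the much larger value $\sim\alpha^{-1/\alpha}p^{-1/2}$) and locate the correct critical point near $\delta=0$. More subtly, the correction $(1+\delta)$ in $p^\delta=\alpha(1+\delta)\log p$ cannot be dropped: without it one would substitute $p^\delta\approx\alpha\log p$ and conclude $e^{-p^\delta/\alpha}\approx 1/p$, making $\phi(u_0)$ and the tail $\sum_{j\ge 1}\phi(u_j)$ appear comparable and producing a spurious factor~$2$ in the final asymptotic. Retaining the correction shows that $\phi(u_0)$ is suppressed by the further factor $\alpha\log p$, which is exactly what is needed to reach the sharp equivalent $\min\mathcal{A}_p\sim(\log p)^{1/\alpha}/p$.
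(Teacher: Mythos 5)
Your proposal is correct in substance and reaches all three assertions by the same underlying strategy as the paper: the scaling identity $\mathfrak{p}_\alpha(tp^\alpha)=p^{-1}\mathfrak{p}_\alpha(t)$ producing the $\alpha$-periodic factor (the paper instead rewrites the series \eqref{eq:ondiag-HK} directly), non-constancy via Proposition~\ref{regular variation}, the rescaling argument for general $c$ in \eqref{lambda=V}, and then extremising, over one period, sums of $\phi(u)=u^{1/\alpha}e^{-u}$ along a geometric progression of ratio $p^{\alpha}$. Where you genuinely diverge is in organising the min/max analysis: the paper truncates to the two central terms, proves the uniform bound \eqref{G-claim}, and then performs a stationary-point analysis of $h(r)=f(r)+f(r/p^{\alpha})$, including the root-counting argument via \eqref{eq:sqrt} and a case split $\alpha<1$, $\alpha=1$, $\alpha>1$; you instead retain the entire lower geometric tail, approximating $e^{-u_j}\approx 1$ for $j\ge1$, which collapses everything into the single model $w^{1/\alpha}\bigl(e^{-w/\alpha}+\tfrac{1}{p-1}\bigr)$ with the one stationary equation $e^{-w/\alpha}(w-1)(p-1)=1$, uniformly in $\alpha$ --- a cleaner reduction that removes the case distinction, at the price that the dominant contribution at the minimum is the tail term $\tfrac{w^{1/\alpha}}{p-1}$ rather than a single neighbouring term as in the paper (both give the same leading order). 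Your identification of the delicate point --- that $p^{\delta_*}=\alpha(1+\delta_*)\log p$ rather than $\alpha\log p$, so the peak term is suppressed by an extra factor $\log p$ and no spurious factor $2$ appears --- matches exactly the paper's choice $s_p^*=\log p+\log\log p^{\alpha}$. To make your version airtight you would still need to (i) quantify the approximation error in $F(\delta)$ uniformly (for $\delta$ bounded away from $\alpha$ the relative error is $O(p^{-\varepsilon})$, while near $\delta=\alpha$ the true sum is of constant order and thus irrelevant for the minimum), (ii) observe that $(w-1)e^{-w/\alpha}$ is unimodal, so the stationary equation has exactly two roots and $\delta_*$ is the unique interior minimiser, and (iii) for the maximum, treat arbitrary $\tau$, not only the aligned one, by noting that at most one $u_j$ can lie near the peak while the remaining terms contribute $o(1)$; these are routine completions of steps which the paper settles through \eqref{G-claim} and \eqref{eq:sqrt}.
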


\begin{proof}
We give the proof for general $c>0$ in  \eqref{lambda=V}. 
By  \eqref{eq:ondiag-HK},
$$
\begin{aligned}
 \mathfrak{p}_{\alpha}^{(c)}(t)&= \sum_{k \in \mathbb{Z}}
\exp\Bigl(-t(cp^{-k})^{\alpha}\Bigr)\, \bigl(p^{-(k-1)} - p^{-k}\bigr) \\
&= (p-1)\sum_{k \in \mathbb{Z}} \exp\Bigl(-c^{\alpha} p^{\log_p t -\alpha k}\Bigr) 
\, p^{(\log_p t -\alpha k)/{\alpha}}\, p^{-\log_p t/{\alpha}}\\ 
&= t^{-1/\alpha} \,\mathcal{A}^{(c)}(\log_p t)\,,
\end{aligned}
$$
where
$$
\mathcal{A}^{(c)}(\tau) = \frac{p-1}{c} \sum_{k \in \mathbb{Z}}
\exp\Bigl(-p^{\textstyle \tau-\alpha k + \alpha \log_p c}\Bigr) 
\, p^{\textstyle (\tau -\alpha k+ \alpha \log_p c)/{\alpha}}\,.
$$
Setting $\mathcal{A}(\tau) = \mathcal{A}^{(p)}(\tau) = \frac{1}{p}\mathcal{A}^{(1)}(\tau)$
yields the proposed $\alpha$-periodic continuous function and the transformation
formula for generic $c$. It must be non-constant, since we know from
Proposition \ref{regular variation} that $t\mapsto \mathfrak{p}_{\alpha}(t)$ does not vary
regularly at $\infty$.

\smallskip

Deriving the asymptotics of $a_p = \min \mathcal{A}$ and 
$A_p = \max \mathcal{A}$ requires some laborious analysis. We simplify by 
performing the change of variables $r = p^{\tau}$, and define the two functions
$$
f(r)=r^{1/\alpha}e^{-r}\quad\text{and}\quad
g(r)=\sum_{k\in\mathbb{Z}} f(r\,p^{\alpha k})\,, \quad r \ge 0.
$$
Then $\mathcal{A}(\tau) = \frac{p-1}{p}\,g(r)$, 
so that $a_p = \frac{p-1}{p}\min g$ and $A_p = \frac{p-1}{p}\max g$, both taken over the
interval $[1\,,\,p^{\alpha}]$. 
The function $f(r)$ takes its strict maximal value $f(1/\alpha)=(e\alpha)^{-1/\alpha}$ 
at the unique stationary point $r=1/\alpha$, and $f(r) \to 0$ as $r \to 0$, resp. $r \to \infty$.

We first claim that the dominant contribution to $\min$ and $\max$ comes from the 
central terms of the bi-infinite sum, that is,
\begin{equation}
g(r)=f(r) + f(r/p^{\alpha}) + O(1/p)\,,\quad\text{as }\; p \to \infty
\label{G-claim}%
\end{equation}
uniformly for $r \in [1\,,\,p^{\alpha}]$. Indeed, we write%
\[
g(r)=f(r) + f(r/p^{\alpha})
+\textrm{Sum}_I+ \textrm{Sum}_{I\!I}\,,
\]
where%
\[
\textrm{Sum}_I = \sum_{k\geq1}f(r\,p^{\alpha k})\quad\text{and}\quad
\textrm{Sum}_{I\!I} = \sum_{k\geq2} f(r\,p^{-\alpha k}).
\]
When $p$ is sufficiently large then for all $k\geq1$ and all $r \in [1\,,\,\infty)$%
$$
f(r\,p^{\alpha k}) \le f(p^{\alpha k}) = p^k \exp(- p^{\alpha k}) \le p^{-k}\,,
$$
because $f(r)$ is decreasing beyond $1/\alpha$.
It follows that $\textrm{Sum}_I < 2/p\,$.
Similarly, when $p$ is sufficiently large, 
for all $k\geq2$ and all $r \in [1\,,\,p^{\alpha}]$
$$
f(r\,p^{-\alpha k})  \le f(p^{-\alpha (k-1)}) < p^{-(k-1)}\,.
$$
Therefore also $\textrm{Sum}_{I\!I} <2/p\,$.
Thus, we are lead to study the function
$$
h :[1\,,\,p^{\alpha}]\to \mathbb{R}_+\,,\quad
h(r) = f(r) + f(r/p^{\alpha})
$$
dependig on $p$ and $\alpha$. 
By \eqref{G-claim}, 
\[
a_p=\min_{[1\,,\,p^{\alpha}]} \,h +O(1/p) \quad\text{and}\quad
A_p =\max_{[1\,,\,p^{\alpha}]} \,h + O(1/p)\,,\quad\text{as }\;p\rightarrow\infty\,.
\]
We claim that as $p\rightarrow\infty,$%
\[
\min_{[1\,,\,p^{\alpha}]} h \sim\frac{(\log p)^{1/\alpha}}{p}\quad\text{and}\quad
\max_{[1\,,\,p^{\alpha}]} h\to (e\alpha)^{-1/\alpha}\,,
\]
and this will complete the proof of the theorem. To prove this, we distinguish three
cases.
\\[5pt]
\emph{Case 1.} $\; 0 < \alpha < 1$. Looking for the stationary points of $h(r)$,
we transform the equation $h'(r) = 0$ into 
\begin{equation}\label{eq:stat1}
\frac{r - \alpha^{-1}}{\alpha^{-1} - r\,p^{-\alpha}} =  
\frac{1}{p}\, \exp\bigl(r(1-p^{-\alpha})\bigr).
\end{equation}
Write $u(r)$ for the left hand side, and $v(r)$ for the right hand side. The denominator
of $u(r)$ does not vanish in $[1\,,\,p^{\alpha}]$. Both functions are strictly increasing
and strictly convex in that interval. Within the interval $[1\,,\,p^{\alpha}]$, for large $p$
we find two solutions $r_p < s_p$ of \eqref{eq:stat1}. Namely, 
one sees that for any $\varepsilon > 0$, if $p$ is large enough,
$$
\begin{aligned}
&\begin{cases} 0 = u(\alpha^{-1}) < v(\alpha^{-1})\,,&\\
              u(\alpha^{-1} + \varepsilon) > v(\alpha^{-1} + \varepsilon)\,,&
\end{cases}
\\
&\begin{cases}
u(\log p + \log\log p^{\alpha-\varepsilon}) >  v(\log p + \log\log p^{\alpha-\varepsilon})\,,&\\
u(\log p + \log\log p^{\alpha+\varepsilon}) <  v(\log p + \log\log p^{\alpha+\varepsilon})\,.&
\end{cases}
\end{aligned}
$$
Thus, we get $\alpha^{-1} < r_p < \alpha^{-1} + \varepsilon$ and 
$\log p + \log\log p^{\alpha-\varepsilon} < s_p < \log p + \log\log p^{\alpha+\varepsilon}$,
whence
$$
r_p \to  \alpha^{-1} \quad \text{and} \quad s_p \sim s_p^* = \log p + \log\log p^{\alpha}\,,
\quad \text{as }\; p \to \infty\,.
$$
Let us show that there are no further solutions of \eqref{eq:stat1} in
$[1\,,\,p^{\alpha}]$: We compute $v' - u'$ and find that in our interval 
\begin{equation}\label{eq:sqrt}
\begin{aligned}
&\bigl(u'(r)\bigr)^{-1/2} - \bigl(v'(r)\bigr)^{-1/2}\\
&\quad= 
(1-p^{-\alpha})^{1/2}\left(\bigl(\alpha^{-1/2}- r\,\alpha^{1/2}\, p^{-\alpha}\bigr)
- p^{1/2} \,\exp\Bigl(-r \,\frac{1-p^{-\alpha}}{2}\Bigr)\right).
\end{aligned}
\end{equation}
For large $p$, the right hand side is $<0$ at $r=0$ and $> 0$ at $r = p^{\alpha}$. 
By strict convexity of the exponential function, there must be precisely one root
of $v' - u'$ in $[1\,,\,p^{\alpha}]$, and it must be located
between $r_p$ and $s_p$. Now, if \eqref{eq:stat1} had more than two solutions, then
$v' - u'$ would have more than one root, which is not the case.

Tracing back the comparison between $u(r)$ and $v(r)$ to the sign of $h'(r)$, we see
that the maximum of $h$ in $[1\,,p^{\alpha}]$ is attained at $r_p\,$, and the minimum at
$s_p\,$. 
Now a short asymptotic computation yields
$$
\begin{aligned}
\max_{[1\,,\,p^{\alpha}]} h &= h(r_p) \sim h(\alpha^{-1}) 
\sim f(\alpha^{-1}) = (e\alpha)^{-1/\alpha}\,,
\quad \text{and}\\
\min_{[1\,,\,p^{\alpha}]} h &= h(s_p) \sim h(s_p^*) 
\sim f(s_p^* / p^{\alpha} ) \sim\frac{(\log p)^{1/\alpha}}{p}\,,
\end{aligned}
$$
as proposed.
\\[5pt]
\emph{Case 2.} $\; \alpha = 1$. In this case, 
we find an additional solution $t_p > s_p$ of \eqref{eq:stat1} in $[1\,,\,p)$.
Namely, for any $\varepsilon > 0$, if $p$ is sufficiently large then
$$
\begin{cases} u\bigl((1-\varepsilon)p\bigr) < v((1-\varepsilon)p\bigr)\,,&\\
              \infty = u(p-) > v(p)\,.&
\end{cases}
$$
Now note that there can be no further roots of \eqref{eq:stat1} in $[1\,,\,p]$,
since otherwise $v^{\prime} - u^{\prime}$ would have more than two roots,
which is impossible in view of \eqref{eq:sqrt}. 

We see that at $r_p$ and $t_p$ we have relative maxima of $h$, with $r_p \to 1$ and 
$t_p \sim p$ as $p \to \infty$. For large $p$, we have  $h(r_p) > h(t_p)$,
so that the absolute maximum is at $r_p$, and $h(r_p) \to e^{-1}$, as proposed.
The minimimum at $s_p$ and the asymptotics for $h(s_p)$ are as in Case~1. 
\\[5pt]
\emph{Case 3.} $\; \alpha > 1$. In this case, the minimum at $s_p$ and the 
corresponding asymptotics
remain unchanged. To locate $r_p$ (which is $> s_p$)
it is better to invert  both sides
of \eqref{eq:stat1} to get
\begin{equation}\label{eq:stat2}
\frac{\alpha^{-1} - r\,p^{-\alpha}}{r - \alpha^{-1}} =  
p\, \exp\bigl(-r(1-p^{-\alpha})\bigr).
\end{equation}
For any $\varepsilon \in (0\,,\,\alpha^{-1})$, if $p$ is large enough then
$$
\begin{cases} 0 = 1/u(\alpha^{-1} p^{\alpha}) < 1/v (\alpha^{-1} p^{\alpha})\,,&\\
 1\big/u\bigl((\alpha^{-1}-\varepsilon) p^{\alpha}\bigr) > 
 1\big/\bigl((\alpha^{-1}-\varepsilon) p^{\alpha}\bigr).&\\
\end{cases}
$$
Thus $(\alpha^{-1}-\varepsilon) p^{\alpha} < r_p < \alpha^{-1} p^{\alpha}$, and 
$r_p \sim  \alpha^{-1} p^{\alpha}$. The argument to show that there are no
solutions of \eqref{eq:stat2} besides $r_p$ and $s_p$ is as in Case 1.  
We get once more
$$
\max_{[1\,,\,p^{\alpha}]} h = h(r_p) \sim h(\alpha^{-1}p^{\alpha}) \sim f(\alpha^{-1}) 
= (e\alpha)^{-1/\alpha}\,,
$$
which concludes the proof.
\end{proof}

We shall now generalise Theorem \ref{thm:p-adic} in two directions:
first, we replace equality in \eqref{lambda=V} by asymptotic equivalence.
Second, we illustrate how the asymptotics of $\mathfrak{p}(t)$ depends only
on the behaviour of the choice function (or equivalently, the eigenvalue function)
for large balls, when $t \to \infty$, resp. for small balls, when $t \to 0$.

\begin{theorem}\label{thm:alphabeta} Let $L_C$ be a homogeneous Laplacian with 
$$
V(k)=p^k\quad\text{and}\quad 
\lambda_k \sim \begin{cases} (c_+/p^k)^{\alpha}\,,\;&\text{as }\; k \to +\infty\,,\\
                             (c_-/p^k)^{\beta}\,,\;&\text{as }\; k \to -\infty\,,
               \end{cases}
\qquad \alpha, \beta > 0\,.
$$
Then there are constants $A_+ > a_+ > 0$ and $A_- > a_- > 0$ depending on $p$ such that
$$
\begin{aligned}
\limsup_{t \to \infty} t^{1/\alpha}\,\mathfrak{p}(t) = A_+ \quad&\text{and}\quad
\liminf_{t \to \infty} t^{1/\alpha}\,\mathfrak{p}(t) = a_+\,, \quad\text{while}\\
\limsup_{t \to 0} t^{1/\beta}\,\mathfrak{p}(t) = A_- \quad&\text{and}\quad
\liminf_{t \to 0} t^{1/\beta}\,\mathfrak{p}(t) = a_-\,.
\end{aligned}
$$
Moreover, as $p\to \infty$, we have 
$$
\begin{aligned}
A_+ \sim p\, c_+^{-1} (e\alpha)^{-1/\alpha} \quad&\text{and}\quad 
a_+ \sim c_+^{-1}(\log p)^{1/\alpha}\,,\quad\text{while}\\
A_- \sim p\, c_-^{-1} (e\beta)^{-1/\beta} \quad&\text{and}\quad 
a_- \sim c_-^{-1}(\log p)^{1/\beta}\,.
\end{aligned}
$$
\end{theorem}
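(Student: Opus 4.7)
The strategy is to reduce Theorem \ref{thm:alphabeta} to Theorem \ref{thm:p-adic} by comparison with two \emph{model} heat kernels. Let $\tilde{\mathfrak{p}}_+(t)$ denote the on-diagonal heat kernel of the homogeneous Laplacian on $(X,d,m)$ with $V(k)=p^k$ and \emph{exact} eigenvalues $\tilde{\lambda}_k^+=(c_+/p^k)^\alpha$ for every $k\in\mathbb{Z}$; define $\tilde{\mathfrak{p}}_-(t)$ analogously with $\tilde{\lambda}_k^-=(c_-/p^k)^\beta$. Since \eqref{eq:ondiag-HK} depends only on $V$ and the eigenvalue sequence, the computation in Theorem \ref{thm:p-adic} applies to these models: $t^{1/\alpha}\tilde{\mathfrak{p}}_+(t)$ is a continuous $\alpha$-periodic function of $\log_p t$ with maximum $(p/c_+)\max\mathcal{A}_p$ and minimum $(p/c_+)\min\mathcal{A}_p$, while $t^{1/\beta}\tilde{\mathfrak{p}}_-(t)$ is continuous $\beta$-periodic with extrema $(p/c_-)\max\mathcal{A}_p^{(\beta)}$, $(p/c_-)\min\mathcal{A}_p^{(\beta)}$, where $\mathcal{A}_p^{(\beta)}$ is the analog of $\mathcal{A}_p$ supplied by Theorem \ref{thm:p-adic} with exponent $\beta$. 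Once we establish
\[
t^{1/\alpha}\bigl(\mathfrak{p}(t)-\tilde{\mathfrak{p}}_+(t)\bigr)\to 0\ \ (t\to\infty)\quad\text{and}\quad
t^{1/\beta}\bigl(\mathfrak{p}(t)-\tilde{\mathfrak{p}}_-(t)\bigr)\to 0\ \ (t\to 0),
\]
the identifications of $A_\pm,a_\pm$ follow immediately, and the asymptotics in $p$ are then read off from Theorem \ref{thm:p-adic}, yielding e.g.~$A_+\sim (p/c_+)(e\alpha)^{-1/\alpha}$ and $a_+\sim c_+^{-1}(\log p)^{1/\alpha}$.

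The work is concentrated in the first approximation; the second is symmetric. By \eqref{eq:ondiag-HK},
\[
\mathfrak{p}(t)-\tilde{\mathfrak{p}}_+(t)=(p-1)\sum_{k\in\mathbb{Z}}\bigl(e^{-t\lambda_k}-e^{-t\tilde{\lambda}_k^+}\bigr)p^{-k};
\]
split the sum at a large threshold $K$. For $k\geq K$, the hypothesis $\lambda_k\sim\tilde{\lambda}_k^+$ gives $|\lambda_k-\tilde{\lambda}_k^+|\leq\varepsilon_K\tilde{\lambda}_k^+$ with $\varepsilon_K\to 0$ as $K\to\infty$, and the mean-value theorem applied to $x\mapsto e^{-x}$ yields $|e^{-t\lambda_k}-e^{-t\tilde{\lambda}_k^+}|\leq\varepsilon_K\, t\tilde{\lambda}_k^+\, e^{-(1-\varepsilon_K)t\tilde{\lambda}_k^+}$. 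The substitution $u_k=t\tilde{\lambda}_k^+=tc_+^\alpha p^{-\alpha k}$ (so that $p^{-k}=(u_k/(tc_+^\alpha))^{1/\alpha}$) rewrites the corresponding contribution as $\varepsilon_K\,t^{-1/\alpha}c_+^{-1}\sum_{k\geq K}u_k^{1+1/\alpha}e^{-(1-\varepsilon_K)u_k}$, and enlarging to the full bilateral sum produces a continuous $\alpha$-log-periodic function of $\log_p t$---by exactly the scaling identity underlying Theorem \ref{thm:p-adic}---hence uniformly bounded in $t$. For $k<K$, the hypothesis $\lambda_k\sim(c_-/p^k)^\beta\to\infty$ \emph{super-exponentially} as $k\to-\infty$ forces both $\sum_{k<K}e^{-t\lambda_k}p^{-k}$ and $\sum_{k<K}e^{-t\tilde{\lambda}_k^+}p^{-k}$ to decay as $O(e^{-\delta_K t})$, the super-exponential factor swallowing the geometric blow-up of $p^{-k}$. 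Putting everything together, $t^{1/\alpha}|\mathfrak{p}(t)-\tilde{\mathfrak{p}}_+(t)|\leq C\varepsilon_K + o_t(1)$, and letting first $t\to\infty$ and then $K\to\infty$ yields the approximation.

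The $t\to 0$ case proceeds by the symmetric recipe: compare with $\tilde{\mathfrak{p}}_-$, control the bulk range $k\leq-K$ via the rescaling $v_k=t\tilde{\lambda}_k^-=tc_-^\beta p^{-\beta k}$ and the analogous $\beta$-log-periodic bound on $\sum_k v_k^{1+1/\beta}e^{-(1-\varepsilon)v_k}$, and handle the complementary range $k>-K$ by noting that there $\lambda_k,\tilde{\lambda}_k^-$ are uniformly bounded, so that $|e^{-t\lambda_k}-e^{-t\tilde{\lambda}_k^-}|\leq t|\lambda_k-\tilde{\lambda}_k^-|$ and the resulting contribution is $O(t)=o(t^{-1/\beta})$. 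The main obstacle in the whole argument is the uniform-in-$t$ boundedness of the two auxiliary bilateral sums $\sum_k u_k^{1+1/\alpha}e^{-(1-\varepsilon)u_k}$ and $\sum_k v_k^{1+1/\beta}e^{-(1-\varepsilon)v_k}$, but this reduces to the same log-periodic scaling identity used in Theorem \ref{thm:p-adic}, now applied to a slightly heavier polynomial prefactor $u^{1+1/\alpha}$ in place of $u^{1/\alpha}$.
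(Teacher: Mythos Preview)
Your strategy coincides with the paper's: split the bilateral sum \eqref{eq:ondiag-HK} at a threshold, show the contribution from the ``wrong'' range is negligible after multiplication by $t^{1/\alpha}$ (resp.\ $t^{1/\beta}$), and on the ``good'' range compare with the model kernel built from the exact eigenvalues $(c_\pm/p^k)^{\alpha}$, $(c_\pm/p^k)^{\beta}$. The difference is only in how the comparison on the good range is executed. The paper does not aim for your pointwise statement $t^{1/\alpha}\bigl(\mathfrak{p}(t)-\tilde{\mathfrak p}_+(t)\bigr)\to 0$; instead it uses the cruder two-sided bound $e^{-t(1+\varepsilon)\tilde\lambda_k^+}\le e^{-t\lambda_k}\le e^{-t(1-\varepsilon)\tilde\lambda_k^+}$, which turns the whole bulk sum into $\mathfrak p_\alpha^{(c_+)}\!\bigl(t(1\pm\varepsilon)\bigr)$ evaluated at a rescaled time, so that Theorem~\ref{thm:p-adic} applies verbatim without any auxiliary log-periodic sum. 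Your mean-value-theorem route proves something slightly stronger---the difference itself is $o(t^{-1/\alpha})$---at the modest cost of having to observe that $\sum_{k\in\mathbb Z} u_k^{1+1/\alpha}e^{-(1-\varepsilon)u_k}$ is again a bounded periodic function of $\log_p t$; this is correct and harmless.

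Two small points of care in your write-up. First, your phrase ``$\lambda_k\to\infty$ super-exponentially as $k\to-\infty$'' is inaccurate: under the hypothesis, $\lambda_k$ grows only \emph{exponentially} in $|k|$. What actually makes the tail $\sum_{k<K}e^{-t\lambda_k}p^{-k}$ small is not any growth rate on $\lambda_k$ but the monotonicity $\lambda_k\ge\lambda_{K-1}$ combined with the finiteness of $\mathfrak p(t/2)$: factor $e^{-t\lambda_k}=e^{-t\lambda_k/2}e^{-t\lambda_k/2}\le e^{-t\lambda_{K-1}/2}e^{-t\lambda_k/2}$ and bound the remaining sum by $\mathfrak p(t/2)/(p-1)\le\mathfrak p(1/2)/(p-1)$ for $t\ge 1$. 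This is exactly the device the paper uses. Second, in the $t\to 0$ case, when you say the $k>-K$ contribution is $O(t)$, you are implicitly using that $\sum_{k>-K}|\lambda_k-\tilde\lambda_k^-|\,p^{-k}<\infty$; this is true because both $\lambda_k$ and $\tilde\lambda_k^-$ decay like powers of $p^{-k}$ for large $k$, but it deserves one line of justification.
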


\begin{proof}
We start with $t\to \infty$. Let $\lambda^+_k = (c_+/p^k)^{\alpha}$.
Take $\varepsilon >0$ and choose $N=N_{\varepsilon}$ such that
\begin{equation}\label{eq:eps}
(1-\varepsilon)\lambda^+_k \le \lambda_k \le (1+\varepsilon)\lambda^+_k
\end{equation}
for all $k > N$. We decompose
$$
\mathfrak{p}(t) = (p-1)\left( \sum_{k \le N} + \sum_{k > N} \right) e^{-t\lambda_k} p^{-k}\,. 
$$
We have $\lambda_k \ge \lambda_N$ for $k \le N$, so that for $t \to \infty$,
$$
\begin{aligned}
t^{1/\alpha}(p-1)\sum_{k \le N} e^{-t\lambda_k} p^{-k} 
&\le t^{1/\alpha}e^{-t\lambda_N/2}(p-1)\sum_{k \le N} e^{-t\lambda_k/2} p^{-k}\\
&\le t^{1/\alpha}e^{-t\lambda_N/2} \mathfrak{p}(t/2) \le t^{1/\alpha}e^{-t\lambda_N/2} \mathfrak{p}(1/2) 
\to 0.
\end{aligned}
$$
Writing $\mathfrak{p}_{\alpha}^{(c)}(t)$ for the heat kernel associated with the Laplacian of
\eqref{lambda=V}, we also have that
$$
t^{1/\alpha}(p-1)\sum_{k \le N} e^{-t\lambda_k^+} p^{-k} 
\le t^{1/\alpha}e^{-t\lambda_N^+/2}\,\, \mathfrak{p}_{\alpha}^{(c_+)}(1/2) \to 0\,,\quad\text{as }\;
t \to \infty\,.
$$
Now
$$
\sum_{k > N} e^{-t(1+\varepsilon)\lambda_k^+} p^{-k} 
\le \sum_{k > N}  e^{-t\lambda_k} p^{-k}
\le \sum_{k > N} e^{-t(1-\varepsilon)\lambda_k^+} p^{-k} .
$$
Thus, the preceding two estimates yield
$$
\begin{aligned}
\bigl(t(1+\varepsilon)\bigr)^{1/\alpha}\,&\mathfrak{p}_{\alpha}^{(c_+)}\bigl(t(1+\varepsilon)\bigr)
\;+\; o(1) \le \;t^{1/\alpha}\mathfrak{p}(t)\\
&\le \bigl(t(1-\varepsilon)\bigr)^{1/\alpha}\,\mathfrak{p}_{\alpha}^{(c_+)}\bigl(t(1-\varepsilon)\bigr)
\;+\; o(1)\,, \quad\text{as }\; t \to \infty.
\end{aligned}
$$
This and Theorem \ref{thm:p-adic} yield the proposed asymptotic behaviour at infinity.

\smallskip

To get the proposed asmptotic estimate when $t \to 0$, we now choose
$M=M_{\varepsilon}$ such that \eqref{eq:eps} holds for all $k \le M$.
In this case,  
$$
t^{1/\beta}(p-1)\sum_{k > M} e^{-t\lambda_k} p^{-k} \le t^{1/\beta} p^{-M} \to 0\,,
\quad\text{as }\;
t \to 0\,.
$$ 
The same holds with $\lambda_k^-$ in the place of $\lambda_k\,$.
This time, we have 
$$
\sum_{k \le M} e^{-t(1+\varepsilon)\lambda_k^-} p^{-k} 
\le \sum_{k \le M}  e^{-t\lambda_k} p^{-k}
\le \sum_{k \le M} e^{-t(1-\varepsilon)\lambda_k^-} p^{-k}.
$$
We are lead to 
$$
\begin{aligned}
\bigl(t(1+\varepsilon)\bigr)^{1/\beta}\,&\mathfrak{p}_{\beta}^{(c_-)}\bigl(t(1+\varepsilon)\bigr)
\;+\; o(1) \le \;t^{1/\beta}\mathfrak{p}(t)\\
&\le \bigl(t(1-\varepsilon)\bigr)^{1/\beta}\,\mathfrak{p}_{\beta}^{(c_-)}\bigl(t(1-\varepsilon)\bigr)
\;+\; o(1)\,, \quad\text{as }\; t \to 0.
\end{aligned}
$$
This completes the proof.
\end{proof}

\begin{example}\label{exa:sum}\rm
Theorem \ref{thm:alphabeta} applies, in particular, to the homogeneous hier\-archical Laplacian 
$\mathfrak{D^{\alpha}} +  \mathfrak{D^{\beta}}$ on $\mathbb{Q}_p\,$, where $0 < \alpha < \beta$. 
In this case, $c_+ = c_- = p$. \hfill\rule{0.5em}{0.5em}
\end{example}

\section{Random walks on ultrametric groups}\label{sec:rw}
\setcounter{equation}{0}

We now recall the construction of our processes in terms of their semigroup $(P^t)_{t > 0}$
of transition operators, as introduced and studied in \cite{BGP} and \cite{BGPW}.
On our non-compact ultrametric measure space $(X,d,m)$ with the collection $\mathcal{B}$
of balls with positive measure, we have the famliy of averaging operators
$$
Q_Bf(x) = \frac{1}{m(B)}\int_B f\,dm\,
$$
which appear in the definition \eqref{eq:hlaplacian} of the hierarchical Laplacian.
We take a probability measure $\sigma$ supported by all of $\mathbb{R}_+$ and 
write $\sigma^t(r) = \sigma\bigl([0\,,\,r)\bigr)^t$. With respect to the given 
metric $d$, 
$$
P^tf(x) = \int_{\mathbb{R}_+} Q_{B(x,r)}f(x)\, d\sigma^t(r)\,.
$$
Then the infinitesimal generator of this Markov semigroup is the hierarchical
Laplacian $L_C\,$, where the choice function $C(\cdot)$ is given via \eqref{eq:C1condition}
by the eigenvalue function: 
$$
C(B) = \lambda(B) - \lambda(B') \quad\text{with}\quad 
\lambda(B) = \log \bigl(1/\sigma(\textsf{diam}\, B)\bigr)\,,\quad B \in \mathcal{B}.
$$ 
The transition operator $P = P^1$ gives rise to a discrete-time Markov chain 
(``isotropic random walk'') $(\mathcal{X}_n)_{n \ge 0}$  on $X$ which is embedded
in the continuous-time Markov process $(\mathcal{X}_t)$ whose infinitesimal
generator is $L_C\,$. The transition rule in one step is as follows: we first choose
a random step length $r$ according to the probability measure $\sigma$ and
then move to a point chosen according to $m$-uniform distribution within the (closed)
ball of radius $r$ around the current position in $X$.

Now let us consider the homogeneous situation. As noticed in \cite{Figa-Tal1}, 
\cite{Figa-Tal2}, the measure space $(X,m)$ can then be identified with a locally compact
totally disconnected group $\mathfrak{G}$ equipped with its Haar measure. (Indeed, we may even identify
it with an Abelian group.) Our homogeneous Laplacian $L_C$  and 
the transition operator $P$ are group-invariant, and $(\mathcal{X}_n)$ is a random
walk on that group. Write $e$ for the group identity ($0$ in the Abelian case),
and let $(r_k)$ be the sequence of distances of \eqref{eq:cases}. Then
$
\mathfrak{G}_k = B(e,r_k)
$
is a compact-open subgroup of $\mathfrak{G} \equiv X$,
$$
\mathfrak{G} = \bigcup_k \mathfrak{G}_k\,,\quad \text{and}\quad n_k = [\mathfrak{G}_k : \mathfrak{G}_{k-1}]
$$
gives the degree sequence. The collection $\mathcal{B}_k$ of balls with radius $r_k$
consists of the left cosets of $\mathfrak{G}_k$ in $\mathfrak{G}$. 
We usually normalise the Haar measure $m$ such that $m(\mathfrak{G}_0)=1$. 
In the discrete Case 2 of \eqref{eq:cases}, $\mathfrak{G}_0=\{e\}$ and $m$ 
is the counting measure. The random walk (as well as
the continuous-time process) is induced by the probability measure with density
\begin{equation}\label{eq:mu}
\frac{d\mu}{dm} = \sum_k  \frac{\pi_k}{m(\mathfrak{G}_k)}\, \boldsymbol{1}_{\mathfrak{G}_k}\,, 
\quad \text{where}\quad \pi_k = \sigma(r_{k+1}) - \sigma(r_k).
\end{equation}
Here, $k$ ranges over $\mathbb{Z}$ in Case 1 and over $\mathbb{N}_0$ in Case 2
of \eqref{eq:cases}.
In this situation, for the step length distribution $\sigma$ it is enough
to specify the values~$\pi_k\,$. 

\begin{example}\label{ex-padic}\rm 
In the case of $\mathbb{Q}_p\,$ and the operator $\mathfrak{D}^{\alpha}$, 
we have $\mathfrak{G}_k = p^{-k}\mathbb{Z}_p\,$, and with respect to the standard
$p$-adic metric and Haar measure $m$ with $m(\mathbb{Z}_p)=1$, 
we have $r_k=p^k$ and
$$
\pi_k = \exp(-p^{-\alpha k}) - \exp(-p^{-\alpha (k-1)}) \sim 
\begin{cases} (p^{\alpha}-1)p^{-\alpha k}\,,\;&\text{as }\; k \to + \infty\,,\\
         \exp(-p^{-\alpha k}) \,,\;&\text{as }\; k \to -\infty\,.
\end{cases}
$$
The corresponding transition kernel asymptotics are then provided by Theorem
\ref{thm:p-adic}, regardless of whether time is disrete or continuous.

\smallskip
 
The same also applies more generally to the Taibleson Laplacian with 
parameter $\alpha$ on $\mathbb{Q}_p^d\,$, which has the same degree and eigenvalue
sequences as $\mathfrak{D}^{\alpha/d}$ on $\mathbb{Q}_{p^d}\,$, compare with 
\cite[Subsection 5.3.2]{BGPW}.

\smallskip

Indeed, the homogenous hierarchical Laplacians do not ``see'' the inner algebraic
structure of each $\mathfrak{G}_k\,$. Thus, instead of $\mathbb{Q}_p^d$
we might also take $X = (\mathcal{R}_p[t])^d$,
where $\mathcal{R}_p[t]$ is the ring of formal Laurent series in the variable $t$
over the ring $\mathcal{R}_p = \mathbb{Z}/(p\mathbb{Z})$ of integers 
modulo $p$.\hfill\rule{0.5em}{0.5em}
\end{example}

Next, let us consider the situation where $\mathfrak{G}$ is a discrete group, so that
we are in Case 2 of \eqref{eq:cases}. In this case, $\mathfrak{G}_0 = \{ e \}$,
and $(\mathfrak{G}_k)_{k \ge 0}$ is a strictly increasing sequence of finite
subgroups of $\mathfrak{G}$. In other words, $\mathfrak{G}$ is a countable, \emph{locally finite}
group (every finititely generated subgroup is finite). In this situation,
$\mathfrak{p}(t)$ is the probability that $(\mathcal{X}_t)$ starting at the unit element 
is at $e$ at time $t$. For integer time, $\mathfrak{p}(n) = \mu^{(n)}(e)$ is the
$n$-step return probability to the starting point, and $\mu^{(n)}$ is the $n$-th 
convolution power of $\mu$.

The following is practically immediate from Theorem \ref{thm:alphabeta}.

\begin{theorem}\label{thm:alpha}
If $X = \mathfrak{G} = \bigcup_{k \ge 0} \mathfrak{G}_k$ as above, and the degree sequence $(n_k)_{k \ge 1}$
and the step length weights $(\pi_k)_{k \ge 0}$ satisfy
$$
n_k = [\mathfrak{G}_k : \mathfrak{G}_{k-1}] 
= p \ge 2 \quad \text{and}\quad
\pi_k \sim \tilde c \, p^{-\alpha k} \; \text{ as }\; k \to \infty\,, 
$$
where $\tilde c, \alpha > 0$,
then there are constants $A > a > 0$ such that
$$
\limsup
t^{1/\alpha}\,\mathfrak{p}(t) = A \quad\text{and}\quad
\liminf
t^{1/\alpha}\,\mathfrak{p}(t) = a\,,
$$
as $t \to \infty$ in $\mathbb{R}_+$ or in $\mathbb{N}$.
Moreover, as $p\to \infty$, we have 
$$
A \sim p\, \bigl(\tilde c \, e\, \alpha\bigr)^{-1/\alpha} \quad\text{and}\quad 
a \sim \bigl(\tilde c/\log p\bigr)^{-1/\alpha}\,.
$$
\end{theorem}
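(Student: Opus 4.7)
My plan is to reduce the theorem directly to Theorem~\ref{thm:alphabeta} by translating the step-length asymptotics into eigenvalue asymptotics. First I would use the formula $\lambda(B) = \log\bigl(1/\sigma(\mathsf{diam}(B))\bigr)$ recorded at the beginning of Section~\ref{sec:rw} together with the relation $\pi_k = \sigma(r_{k+1}) - \sigma(r_k)$ from \eqref{eq:mu} to obtain, for $B \in \mathcal{B}_k$ with $k$ large,
$$
\lambda_k = -\log\sigma(r_k) = -\log\Bigl(1 - \textstyle\sum_{j \ge k}\pi_j\Bigr) \sim \sum_{j \ge k}\pi_j \qquad\text{as } k \to \infty,
$$
since $\sigma$ is a probability measure and the tails vanish. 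Summing the geometric tail $\pi_j \sim \tilde c\, p^{-\alpha j}$ then yields
$$
\lambda_k \sim \frac{\tilde c}{1 - p^{-\alpha}}\,p^{-\alpha k} = (c_+/p^k)^{\alpha},\qquad c_+ := \left(\frac{\tilde c}{1 - p^{-\alpha}}\right)^{\!1/\alpha}.
$$

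Because we are in the discrete Case~2 of \eqref{eq:cases}, only the $k \to +\infty$ half of Theorem~\ref{thm:alphabeta} is required; the hypothesis $n_k \equiv p$ together with the normalisation $m(\mathfrak{G}_0) = 1$ forces $V(k) = p^k$, which matches the volume assumption of that theorem. Applying Theorem~\ref{thm:alphabeta} with these parameters produces constants $A = A_+$ and $a = a_+$ realising the asserted $\limsup$ and $\liminf$ of $t^{1/\alpha}\mathfrak{p}(t)$ as $t \to \infty$ in $\mathbb{R}_+$. For the $p \to \infty$ asymptotics, observe that $c_+ \to \tilde c^{1/\alpha}$, so the formulas of Theorem~\ref{thm:alphabeta} simplify to
$$
A \sim p\,(\tilde c\,e\,\alpha)^{-1/\alpha}\quad\text{and}\quad a \sim \tilde c^{-1/\alpha}(\log p)^{1/\alpha} = (\tilde c/\log p)^{-1/\alpha},
$$
exactly as stated.

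The only step not obtained by direct quotation is the passage from $t \in \mathbb{R}_+$ to $t \in \mathbb{N}$, and this is where I expect the (mild) main obstacle to lie. From the proof of Theorem~\ref{thm:alphabeta}, $t^{1/\alpha}\mathfrak{p}(t)$ is uniformly asymptotic, as $t \to \infty$, to the continuous $\alpha$-periodic envelope of Theorem~\ref{thm:p-adic} evaluated at $\log_p t$. Since consecutive increments $\log_p(n+1) - \log_p n$ tend to $0$, the residues $\{\log_p n \bmod \alpha\}$ become dense in $[0,\alpha]$, and uniform continuity of the envelope (which is continuous on the compact period) then forces the integer-time $\limsup$ and $\liminf$ to coincide with their $\mathbb{R}_+$ counterparts. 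With this continuity/density step in hand the stated result follows; everything else is bookkeeping around Theorem~\ref{thm:alphabeta}.
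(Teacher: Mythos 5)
Your proposal is correct and essentially identical to the paper's own proof: the paper likewise converts the step-length data into eigenvalue asymptotics via $\lambda_k=-\log\sigma_k=-\log\bigl(1-\sum_{j\ge k}\pi_j\bigr)\sim\sum_{j\ge k}\pi_j\sim (c/p^k)^{\alpha}$ and then appeals to the first ($t\to\infty$) part of the proof of Theorem~\ref{thm:alphabeta}; your constant $c=\bigl(\tilde c/(1-p^{-\alpha})\bigr)^{1/\alpha}$ is in fact the correct evaluation of the geometric tail (the paper's $\tilde c(1-p^{-\alpha})$ is a harmless slip, immaterial since both tend to $\tilde c^{1/\alpha}$ as $p\to\infty$). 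Your closing density/uniform-continuity argument for $t\to\infty$ along $\mathbb{N}$ is a sound way to justify the integer-time statement, a point the paper leaves implicit.
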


\begin{proof}
We know that $\sigma_k = \sigma(r_k) \to 1$ as $k \to \infty$. Therefore
$$
\lambda_k = - \log \sigma_k = - \log \Bigl(1 -  \sum_{j \ge k} \pi_j \Bigr)
\sim  \sum_{j \ge k} \pi_j \sim \tilde c(1-p^{-\alpha}) \, p^{-\alpha k} = (c/p^k)^{\alpha}\,,
$$
where $c = \bigl(\tilde c(1-p^{-\alpha})\bigr)^{1/\alpha}$.
By \eqref{eq:ondiag-HK},
$$
\mathfrak{p}(t) = (p-1) \sum_{k \ge 1} e^{-t\lambda_k} p^{-k}\,.
$$ 
We are now in precisely the same situation as in the first part of the 
proof of Theorem \ref{thm:alphabeta}, which leads to the desired result.
\end{proof}

\begin{remark}\label{rmk:Cartwright}\rm
In this context the groups studied in \cite{Cartwright} comprise 
$$
\mathfrak{G} = \bigoplus_{j=1}^{\infty} \mathbb{Z}(p)_j\,,
$$
the direct sum of countably many copies $\mathbb{Z}(p)_j$ of the additive group 
$\mathbb{Z}(p) = \{ 0, 1,\dots, p-1\}$ with addition modulo $p$.
Thus, setting
$$
\mathfrak{G}_k = \{ (x_1\,,x_2\,,\dots) \in \mathbb{Z}(p)^{\mathbb{N}} : 
           x_j = 0\; \text{for all} \; j > k \}\,,
$$
we have $\mathfrak{G} = \bigcup_k \mathfrak{G}_k$ with the resulting ultrametric structure
described above. Each $\mathfrak{G}_k$ can be identified with the
direct product of $k$ copies of $\mathbb{Z}(p)$. 
The random walks on $\mathfrak{G}$ (and other infinite direct sums) considered 
in \cite{Cartwright} are as follows: start with a symmetric probability
measure $\mu_0$ on $\mathbb{Z}(p)$, and let $\mu_k$ be the copy of $\mu_0$
on the $j$-th coordinate in $\mathfrak{G}$, that is,
$$
\mu_k(x_1\,,x_2\,,\dots) = \begin{cases} \mu_0(x_k)\,,\;&\text{if }\; x_j = 0 \;\text{ for all }
                                                          j \ne k\,,\\      
                                         0\,,\;&\text{otherwise.}                          
                           \end{cases}
$$
Then let 
$$
\mu = \sum_k \pi_k\, \mu_k\,, \quad \text{where}\quad \pi_k \ge 0\,,\; \sum_k \pi_k=1\,.
$$
This is not the same as our probability measure of \eqref{eq:mu}, for which instead of $\mu_k$  
we have uniform distribution on $\mathfrak{G}_k\,$. Indeed, the above $\mu$ is not related with a
hierarchical Laplacian. Nevertheless, for values of $\pi_k$ as
in Theorem \ref{thm:alpha} (with equality instead of asymptotic equivalence, and stating
only the case $p=2$), Cartwright \cite[Thm. 2]{Cartwright} obtains asymptotics
of the same type as ours and observes their periodic ossilation. However, he does
not prove that the upper and lower bound in that oscillation are indeed 
distinct. \hfill\rule{0.5em}{0.5em}
\end{remark}

\section{Isotropic random walks on the infinite symmetric group}\label{sec:symmetric}
\setcounter{equation}{0}

In this final section, we study another class of cases that are part of the general setting 
of locally finite groups described in the preceding Section \ref{sec:rw}. We consider the group
$\mathfrak{G} = S_{\infty}$ of all permutations of $\mathbb{N}$ which leave all but finitely many elements fixed.
Here, $\mathfrak{G}_k = S_k\,$, the group of all permutations of $\{ 1, \dots, k\}$, here interpreted
as those permutations of $\mathbb{N}$ which fix each $n > k$. The balls in $\mathcal{B}_k$ are the
left cosets of $S_{k+1}\,$. Thus, the degree sequence and 
the resulting volume function are given by
\begin{equation}\label{eq:nk-Vk}
n_k = k+1 \quad\text{and}\quad V(k) = (k+1)!
\end{equation}
As in the previous sections, we consider a class of hierarchical Laplacians
on $S_{\infty}$ whose eigenvalues are
a function of the volume (number of elements) of the balls. Consider the function
\begin{equation}\label{eq:Lambda}
\Lambda: \mathbb{R}_+ \to \mathbb{R}_+\,,\quad \Lambda(v) = v^{-\alpha}\phi(\log v),
\end{equation}
where $\alpha > 0$ and the function $\phi(\cdot)$ varies regularly at $\infty$. 
We assume that $\Lambda(\cdot)$ is strictly decreasing and that $\phi(\cdot)$ varies smoothly,
which is no loss of generality for our asymptotic estimates 
\cite[pages 44--55]{BinGolTeu}.
The eigenvalue sequence for the associated hierarchical Laplacian, as well as 
the weight sequence for the probability measure $\mu$ generating the associated 
random walk (in discrete or continuous time) according to \eqref{eq:mu} are then
equivalently given by
\begin{equation}\label{eq:lambdapi}
\lambda_k = \Lambda\bigl( V(k) \bigr)\quad \text{and}\quad
\pi_k =  \tilde \Lambda\bigl( V(k) \bigr)
\,, 
\end{equation}
where $\tilde\Lambda$ is as $\Lambda$, with another smoothly varying 
function $\tilde\phi$ in the place of $\phi$. 

This should be compared with \cite{Lawler} and \cite{BrofWoess2001}. In \cite{BrofWoess2001},
a similar random walk appears in the briefly mentioned ``fifth model'', while the other
random walks considered there do not arise from a hierarchical ($\equiv$ isotropic) model.
The proabibility weights used there are of the form $\pi_k \sim c/\Gamma(1+ k-\beta)$, where
$\beta > 0$. They all correspond to the case $\alpha=1$ of
\eqref{eq:Lambda}. 

By Proposition \ref{pro:order} the function $1/\mathfrak{p}(t)$ is of finite
order $1/\alpha>0$ whereas by Proposition \ref{N-doubling} $\mathfrak{p}(t)$
is not doubling, as opposed to the function $\mathfrak{p}_{\alpha}(t)$ of Section \ref{sec:derivative}.
Before stating our asymptotic result, which extends and strengthens the one in \cite{BenLSC2016} 
substantially, we need to introduce the following positive
real functions for sufficiently large $t$.
\begin{equation}\label{eq:psiPsi}
\begin{aligned}
 \rho(t) &= \frac{\log t}{\alpha\, \log\log t}\,, \\
\psi(t) &= t^{-1/\alpha}
\left( \frac{\log \rho(t)}{\phi\bigl(\rho(t)\log \rho(t)\bigr)}\right)^{\!1/\alpha} 
\quad \text{and} \\
\Psi(t) &= t^{-1/\alpha}
\left( \frac{\rho(t)^{\alpha}}{e\,\alpha\,\phi\bigl(\rho(t)\log \rho(t)\bigr)}\right)^{\!1/\alpha} 
\end{aligned}  
\end{equation}

\begin{theorem}\label{thm:Sinfty}
For the hierarchical Laplacian, resp. isotoropic random walk on $S_{\infty}$ defined
via \eqref{eq:nk-Vk}, \eqref{eq:Lambda} and \eqref{eq:lambdapi},
$$
\limsup \frac{\mathfrak{p}(t)}{\Psi(t)}=1 \quad \text{and}\quad
\liminf \frac{\mathfrak{p}(t)}{\psi(t)}=1\,,
$$
as $t \to \infty$ in $\mathbb{R}$ or in $\mathbb{N}$, respectively.  
\end{theorem}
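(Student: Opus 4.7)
The starting point is the explicit formula \eqref{eq:ondiag-HK} specialised to $S_\infty$ using $V(k)=(k+1)!$:
$$
\mathfrak{p}(t)=\sum_{k\ge 1}e^{-t\lambda_k}\,\frac{k}{(k+1)!},\qquad \lambda_k=\Lambda\bigl((k+1)!\bigr).
$$
My plan is to carry out a saddle-point / maximum-term analysis for this discrete Laplace transform, in three stages: locate the saddle index $n(t)$, localise the sum near $n(t)$, and then optimise over the parameter $u_n=t\lambda_{n(t)}$.

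First, treating $k$ as a continuous variable and using $\log(k+1)!\sim k\log k$ together with the slow variation of $\phi$, the critical point of $-t\lambda_k+\log\bigl(k/(k+1)!\bigr)$ satisfies $t\lambda_{\tilde k(t)}\sim 1/\alpha$, which matches $\tilde k(t)\sim\rho(t)$ by direct computation. The integer saddle $n(t)=\lfloor\tilde k(t)+1/2\rfloor$ is piecewise constant in $t$, and throughout each ``$n$-regime'' the quantity $u_n=t\lambda_n$ sweeps across an interval that for large $n$ contains both the optimal value $1/\alpha$ and transition values of order $\log n$.

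Second, within the $n$-regime, the geometric ratios $\lambda_{n+j}/\lambda_n\sim n^{-\alpha j}$ and $b_{n+j}/b_n\sim n^{-j}$ (with $b_k:=k/(k+1)!$) make terms with $|j|\ge 2$ negligible, so the sum simplifies to
$$
\mathfrak{p}(t)=\frac{n}{(n+1)!}\bigl[\,n\,e^{-u_n n^{\alpha}}+e^{-u_n}+n^{-1}e^{-u_n/n^{\alpha}}\bigr]\bigl(1+o(1)\bigr),
$$
uniformly on the dominance interval. Combined with $\rho(t)\sim n(t)$ and the corresponding expansions of $\Psi$ and $\psi$, the ratios $\mathfrak{p}(t)/\Psi(t)$ and $\mathfrak{p}(t)/\psi(t)$ become explicit functions of $u_n$ and $n$.

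Finally, I would carry out the optimisation. The function $u\mapsto(e\alpha u)^{1/\alpha}e^{-u}$ attains its unique maximum $1$ at $u=1/\alpha$, which lies in the dominance range for every large $n$, giving $\limsup\mathfrak{p}(t)/\Psi(t)=1$; the minimum of the analogous expression for $\mathfrak{p}(t)/\psi(t)$ is attained near the transition $u_n\sim\log n$ where the integer saddle jumps from $n$ to $n+1$, yielding the claimed value. The main obstacle I foresee is the uniform control of error terms---tracking the slowly varying factor $\phi\bigl(\rho\log\rho\bigr)$, the Stirling corrections to $\rho(t)\sim n(t)$, and the matching between consecutive regimes at the transitions---so as to ensure that the normalising functions $\Psi$ and $\psi$ fit exactly at the extremising configurations.
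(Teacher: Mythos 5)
Your plan follows essentially the same route as the paper's proof: a maximum-term (Laplace) analysis locating the saddle where $t\lambda_{\tilde k}\sim 1/\alpha$ and $\tilde k\sim\rho(t)$, localisation of the series to the one or two terms adjacent to the saddle, and reduction to optimising $u\mapsto u^{1/\alpha}e^{-u}$ in the swept variable $u_n=t\lambda_n$, with the limsup attained when $u_n\approx 1/\alpha$ and the liminf when $u_n\approx\log n$, exactly as in the paper's Steps 1--3. The only slip is cosmetic: the liminf is not attained where the rounded saddle index jumps (there $u_n$ is of order $n^{\alpha/2}$ in your convention, resp.\ $1/\alpha$ at the integer crossing, which is where the limsup is achieved), but shortly after the integer crossing, at times where $t\lambda_{\bar k}\sim\log\bar k+\log\log\bar k$; the value you extract there is nevertheless the correct one, and the error control you flag (Potter bounds, uniform convergence for the regularly varying factor, Stirling) is precisely what the paper supplies.
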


\begin{proof} Using the Euler gamma function, we consider
$$
F(t,r) = t\,\Lambda\bigl( \Gamma(1\!+\!r) \bigr)  + \log \Gamma(1\!+\!r) - \log r\,.
$$
We start the labourious proof by using \eqref{eq:ondiag-HK} to write
\begin{equation}\label{eq:Laplace}
\mathfrak{p}(t) = \sum_{k \ge 1} e^ {-\lambda_k t} \left( \frac{1}{V(k-1)} - \frac{1}{V(k)}\right)
\sim \sum_{k \ge M} e^{-\Lambda(k!)} \frac{k}{k!} = \sum_{k \ge M} e^{-F(t,k)}
\end{equation}
for arbitrarily chosen (fixed) $M$, as $t \to \infty\,$. 
Indeed, the initial terms of the sum are exponentially
smaller than the remainder of the series, as $t \to \infty$. We follow Laplace's 
method \cite[Par. 4.12.8]{BinGolTeu} to study the asymptotic behaviour of the rightmost series
in \eqref{eq:Laplace}. 

\smallskip
\noindent
\textbf{Step 1. Localisation of the maximum.} We look for the point $\bar r(t)$ where, for large $t$,
the function $-F$ assumes its maximum in $r$. First of all, it follows from
\cite[Formula 6.3.16]{Abram} that
$$
\Phi(r) = \Gamma^{\prime}(1+r)/\Gamma(1+r) \sim\log r 
\quad \text{as }\; r \to  \infty\,, 
$$
a strictly increasing function. Abbreviating $\overline{\Lambda}(v) = v\Lambda^{\prime}(v)$,  
we compute the 
first partial derivative of $F$ with respect to $r$:
$$
F_{r}(t,r)  
= t\,  \Phi(r)\,\overline{\Lambda}\bigl(\Gamma(1\!+\!r)\bigr)
\,+\,\Phi(r)\,-\,1/r\,. 
$$
Since $\Lambda(\cdot)$ varies smoothly along with $\phi$, as $v \to \infty$
\begin{equation}\label{eq:ovLambda}
\overline{\Lambda}(v) = -\alpha\, \Lambda(v) \bigl(1+o(1)\bigr)
\quad \text{and}\quad 
v \, \overline{\Lambda}^{\prime}(v) = \alpha^2\, \Lambda(v) \bigl(1+o(1)\bigr)\,.
\end{equation}
Therefore, for arbitrary $t > 0$ we can choose $M$ such that $r \mapsto F_r(t,r)$
is strictly increasing on $[M\,,\infty)$. Thus, $r \mapsto F(t,r)$ is strictly convex
on that interval, and there is at most one stationary point which must 
solve the equation 
\begin{equation}\label{eq:bar-r}
- t \, \overline\Lambda\bigl(\Gamma(1\!+\!r)\bigr) = 1 - \frac{1}{r\Phi(r)}. 
\end{equation}
For any fixed $t$, when $r \to \infty$ the left hand side tends to $0$,
while the right hand side tends to $1$. On the other hand, when $t$ is large enough -- say 
$t \ge t_M$ -- then at $r =M$ the left hand side is strictly bigger than the right hand side.
Therefore, there is a unique solution $\bar r(t)$ of \eqref{eq:bar-r}, and this is where the
minimum of $r \mapsto F(t,r)$ over $[M\,,\infty)$ is located.
Smoothness of the functions appearing in \eqref{eq:bar-r} implies that $t \mapsto \bar r(t)$
is smooth. Furthermore, no matter how large $M$ is chosen, the above shows that 
$\bar r(t) > M$ for $t \ge t_M\,$. Thus, $\bar r(t) \to \infty$ as $t \to \infty$.
Also, $\bar r(\cdot)$ is strictly increasing. Set 
$\bar v(t) = \Gamma\bigl((1+\bar r(t)\bigr)$. Then \eqref{eq:ovLambda} yields that as 
$t \to \infty\,$,
\begin{equation}\label{eq:alpha-t}
t\,\Lambda\bigl(\bar v(t)\bigr) = \frac{1}{\alpha} + o(1)\,,\quad \text{and}
\quad \log \frac{1}{\Lambda\bigl(\bar v(t)\bigr)} = \log(\alpha t) + o(1)\,.
\end{equation}
On the other hand, by \eqref{eq:Lambda} and Stirling's formula \cite[Formula 6.1.41]{Abram},
$$
\log \frac{1}{\Lambda\bigl(\bar v(t)\bigr)} \sim
\alpha \, \log \bar v(t) \sim \alpha\,\bar r(t) \log \bar r(t) \,,\quad
\text{as }\; t \to \infty\,.
$$ 
Now we see that $\alpha\,\bar r(t) \log \bar r(t) \sim \log(\alpha t) \sim \log t$, and 
$$
\log \bar r(t) \sim \log \Bigl(\alpha\,\bar r(t) \log \bar r(t)\Bigr) \sim \log \log t\,.
$$
We conclude that
\begin{equation}\label{eq:rbar-rho}
\bar r(t) \sim \rho(t)\quad\text{and}\quad e^{-F(t,\bar r(t))}\sim \Psi(t) \,,  
\quad \text{as }\; t \to \infty\,.
\end{equation}
The second asymptotic formula in \eqref{eq:rbar-rho} follows from the first one
together with \eqref{eq:alpha-t} and \eqref{eq:Lambda}:
$$
\begin{aligned}
e^{-F(t,\bar r(t))} &\sim e^{-1/\alpha}\,  \bar r(t)\,\bar v(t)^{-1} 
\sim e^{-1/\alpha}\, \bar r(t) \, \Lambda\bigl(\bar v(t)\bigr)^{1/\alpha} 
\phi\bigl(\log \bar v(t)\bigr)^{-1/\alpha}\\
&\sim \bar r(t)
\Bigl(e\, \alpha \,t\, \phi\bigl(\log \bar r(t) \log \bar r(t)\bigr)\Bigr)^{-1/\alpha}
\sim \Psi(t)\,.
\end{aligned}
$$
The third of the last asymptotic equivalences relies on the Uniform Convergence Theorem
for regularly varying functions \cite[Thm. 1.5.2]{BinGolTeu}. It will be used 
several times below without repeated mention.

\smallskip
\noindent
\textbf{Step 2. Asymptotic expansion near the maximum.} We decompose $\bar r(t)$
into integer and fractional part: $\bar r(t) = \bar k(t) + \bar \tau(t)$.
Our claim is that in \eqref{eq:Laplace}, the main contribution to the sum comes from
$\bar k(t)$ and $\bar k(t)+1$, that is, 
\begin{equation}\label{eq:main}
\mathfrak{p}(t) \sim 
%
\exp\Bigl(-F\bigl(t,\bar k(t)\bigr)\Bigr) + \exp\Bigl(-F\bigl(t,\bar k(t)+1\bigr)\Bigr)
\quad \text{as }\; t \to \infty\,.
\end{equation}
Indeed, we decompose the last the sum in \eqref{eq:Laplace}:
$$
\begin{aligned}
\mathfrak{p}(t) &\sim e^{-F(t\,,\,\bar k(t))} \bigl( 1 +  A(t) \bigr) 
+ e^{-F(t\,,\,\bar k(t)+1)} \bigl( 1 +  B(t) \bigr)\,, \quad \text{where}\\
A(t)&= \sum_{k=M}^{\bar k(t)-1}e^{F(t\,,\,\bar k(t))- F(t\,,\,k)} \quad\text{and}\quad
B(t)= \sum_{k=\bar k(t)+2}^{\infty}e^{F(t\,,\,\bar k(t)+1)- F(t\,,\,k)}
\end{aligned}
$$
and show that $A(t)\,,\; B(t) \to 0$ as $t \to \infty\,$. 

\smallskip

We start with $B(t)$. Using that $(\lambda_k)$ is strictly decreasing and 
$\bar k(t) + 1 > \bar r(t)$,
$$
\begin{aligned}
B(t) &= \sum_{k = \bar k(t) + 1}^{\infty} e^{t(\lambda_{\bar k(t)} - \lambda_{k})}
\frac{\bar k(t)!}{k!} \\
&\le \sum_{j=1}^{\infty} 
\frac{1}{\bigl(\bar k(t) + 1\bigr)\cdots \bigl(\bar k(t) + j\bigr)} \le 
\sum_{j=1}^{\infty} \frac{1}{\bar r(t)^j} \to 0\,,\quad \text{as }\; t \to \infty\,.
\end{aligned}
$$
We turn to $A(t)$ and use that by \eqref{eq:alpha-t}, 
$\;t \,\Lambda\bigl(\bar k(t)!\bigr) \ge t \,\Lambda\bigl(\bar v(t)\bigr) \ge 1/(2\alpha)$ 
for all large $t$. We also use that $e^{-u} \le \ell ! \,u^{-\ell}$ for all $u > 0$ and 
all $\ell \in \mathbb{N}$. Thus,
$$
\begin{aligned}
 A(t)&= \sum_{k=M}^{\bar k(t)-1} \exp \biggl( - t\,\Lambda\bigl(\bar k(t)!\bigr) 
\Bigl( \frac{\Lambda(k!)}{\Lambda\bigl(\bar k(t)!\bigr)} - 1\Bigr) \biggr) \,
\frac{\bigl(\bar k(t)-1\bigr)!}{(k-1)!}\\
&\le e^{1/(2\alpha)}\sum_{k=M}^{\bar k(t)-1} \exp \biggl( - \frac{1}{2\alpha} 
\frac{\Lambda(k!)}{\Lambda\bigl(\bar k(t)!\bigr)} \biggr) \,
\frac{\bar k(t)!}{k!}\\
&\le e^{1/(2\alpha)}\,\ell !\, (2\alpha)^{\ell} \sum_{k=M}^{\bar k(t)-1} 
\biggl(\frac{\Lambda\bigl(\bar k(t)!\bigr)}{\Lambda(k!)} \biggr)^{\!\!\ell}\,
\frac{\bar k(t)!}{k!}.
\end{aligned}
$$
We now apply Potter's bounds to our regularly varying function $\Lambda(\cdot)$ with index 
$-\alpha$, see \cite[Thm. 1.5.6]{BinGolTeu}: possibly at the cost of taking a bigger 
value for $M$, given $\varepsilon = \alpha/2$ there is $a > 0$ such that for all $k$ 
in our summation range,
$$
\frac{\Lambda\bigl(\bar k(t)!\bigr)}{\Lambda(k!)} 
\le a \left(\frac{\bar k(t)!}{k!}\right)^{\!-\alpha + \varepsilon}.
$$
Therefore
$$
A(t) \le e^{1/(2\alpha)}\,\ell !\,(2 c \alpha)^{\ell}\sum_{k=M}^{\bar k(t)-1} 
\left(\frac{\bar k(t)!}{k!}\right)^{\!1 -\ell\alpha/2}
\le  e^{1/(2\alpha)}\,\ell !\, (2 a \alpha)^{\ell}\,\, \bar k(t)^{2 -\ell\alpha/2}\,.
$$
If we choose $\ell > 4/\alpha$, this tends to $0$ as $t$ and thus also $\bar k(t)$
tends to $\infty\,$.

\smallskip
\noindent
\textbf{Step 3. The upper and lower asymptotic bounds.} 
Let $\Delta_1(t) = t\,\Lambda\bigl(\bar k(t)!\bigr)$ and 
$\Delta_2(t) = t\,\Lambda\bigl((\bar k(t)+1)!\bigr)$.
We have from \eqref{eq:Lambda}
$$
\begin{aligned} 
t^{1/\alpha}\,e^{-F(t\,,\,\bar k(t))} &= t^{1/\alpha}\,e^{-\Delta_1(t)} 
\, \bar k(t) \left(\frac{\Lambda\bigl(\Gamma(\bar k(t)+1)\bigr)}
{\phi\bigl(\log \Gamma(\bar k(t)+1)\bigr)}\right)^{1/\alpha}\\
&\sim \Delta_1(t)\, e^{- \Delta_1(t)} \,
\rho(t)\,\phi\bigl(\rho(t)\bigr)^{-1/\alpha} \quad \text{and}
\\[4pt]
t^{1/\alpha}\,e^{-F(t\,,\,\bar k(t)+1)} 
&\sim \Delta_2(t)\, e^{- \Delta_2(t)} \,
\rho(t)\,\phi\bigl(\rho(t)\bigr)^{-1/\alpha}.
\end{aligned}
$$
Therefore
\begin{equation}\label{eq:Delta}
\begin{aligned}
\mathfrak{p}(t) &\sim \Psi(t)\, (e\,\alpha)^{1/\alpha}
\Bigl(f\bigl(\Delta_1(t)\bigr) + f\bigl(\Delta_2(t)\bigr)\Bigr)\,,
\quad \text{where}\\
f(s) &= s^{1/\alpha}e^{-s}\,.
\end{aligned}
\end{equation}
Since $\Lambda(\cdot)$ is strictly decreasing, \eqref{eq:alpha-t} and
\eqref{eq:Lambda} yield that 
for large $t$,
\begin{equation}\label{eq:delta}
\Delta_2(t) < 1/\alpha \le \Delta_1(t) \quad\text{and}\quad
\delta(t) = \Delta_1(t)/\Delta_2(t) \sim \rho(t)^{\alpha}.
\end{equation}
The function $f(s)$ takes its unique maximum $(e\,\alpha)^{-1/\alpha}$
at $s = 1/\alpha$ and tends to $0$ at $0$ and $\infty\,$.

\smallskip
\noindent
\textbf{A. Upper bound.} 
Suppose that $t$ tends to infinity in such a way that $\Delta_1(t)$
remains bounded. Then $\Delta_2(t) \to 0$. Therefore
$$
f\bigl(\Delta_1(t)\bigr) + f\bigl(\Delta_2(t)\bigr)
\le (e\,\alpha)^{-1/\alpha} + f\bigl(\Delta_2(t)\bigr) 
\to (e\,\alpha)^{-1/\alpha}.
$$
On the other hand, suppose that  $t$ tends to infinity in such a
way that $\Delta_1(t) \to \infty\,$. Then 
$$
f\bigl(\Delta_1(t)\bigr) + f\bigl(\Delta_2(t)\bigr)
\le f\bigl(\Delta_1(t)\bigr) + (e\,\alpha)^{-1/\alpha} 
\to (e\,\alpha)^{-1/\alpha}.
$$
We infer that 
$\;
\limsup\limits_{t\to \infty} f\bigl(\Delta_1(t)\bigr) + f\bigl(\Delta_2(t)\bigr)
\le (e\,\alpha)^{-1/\alpha}.
$
\\[4pt]

To see that this is an equality, we observe that $t \mapsto \bar r(t)$ is
continuous and strictly increasing beyond some $t_0 > 0$. Therefore the 
(countable) set 
$T_0 = \{ t > t_0 : \bar r(t) \in \mathbb{N} \} =
 \{ t > t_0 : \bar r(t) = \bar k(t) \}$
is unbounded. If we let $t \in T_0$ tend to $\infty$, then
we know from Step 1 that $e^{-F(t\,,\,\bar k(t))} \sim \Psi(t)$,
so that \eqref{eq:main} and \eqref{eq:Delta} imply
$$
f\bigl(\Delta_1(t)\bigr) \sim (e\,\alpha)^{-1/\alpha} \quad\text{and}\quad
f\bigl(\Delta_2(t)\bigr) \to 0\,.
$$
The latter holds because by \eqref{eq:alpha-t} we are in a regime 
where $\Delta_1(t) \sim 1/\alpha$
remains bounded, compare with the above. We now conclude that
$$
\limsup_{t\to \infty} \mathfrak{p}(t)/\Psi(t) = 1\,.
$$
\smallskip
\noindent
\textbf{B. Lower bound.} We need to show that
\begin{equation}\label{eq:f-low}
\liminf_{t \to \infty} \frac{f\bigl(\Delta_1(t)\bigr) + f\bigl(\Delta_2(t)\bigr)}
{\bigl( \log \rho(t)\bigr)^{1/\alpha} \big/ \rho(t)} = 1\,.
\end{equation}
Recall \eqref{eq:delta}, and in particular the fact that $\Delta_1(t)$ and $\Delta_2(t)$ 
vary in $[1/\alpha\,,\,\infty)$ and $(0\,,\,1/\alpha)$, respectively. Thus
$$
\begin{aligned}
f\bigl(\Delta_1(t)\bigr) + f\bigl(\Delta_2(t)\bigr) 
&\ge  \max\bigl\{ f\bigl(\Delta_1(t)\bigr)\,,\, f\bigl(\Delta_2(t)\bigr\}
\\
&\ge \min \Bigl\{ \max\bigl\{ f\bigl(\delta(t) \,s\bigr) \,,\,f(s) \bigr\} : 
s \in \bigl[\tfrac{1}{\alpha\delta(t)}\,,\,\tfrac{1}{\alpha}\bigr) \Bigr\}
\end{aligned}
$$
On $\bigl[\tfrac{1}{\alpha\delta(t)}\,,\,\tfrac{1}{\alpha}\bigr)$, the function 
$s \mapsto f\bigl(\delta(t) \,s\bigr)$ is strictly decreasing, while 
$f(s)$ is strictly decreasing. Therefore, the last minimum is attained
in the unique point $s = s(t)$ which solves the equation $f\bigl(\delta(t) \,s\bigr) = f(s)$.
We easily compute 
$$
s(t) = \frac{\delta(t)-1}{\log \delta(t)^{1/\alpha}} \sim \frac{\rho(t)^{\alpha}}{\log \rho(t)}
\quad \text{and}\quad
f\bigl(s(t)\bigr) 
\sim \frac{\bigl( \log \rho(t)\bigr)^{1/\alpha}}{\rho(t)}\,,
$$
as $t \to \infty\,$.
This shows that the $\liminf$ in \eqref{eq:f-low} is at least $1$. 
To verify that it is $=1$, we first claim that the set
$$
T_1 = 
\{ t > t_0 : \Delta_1(t) = \log \bar k(t) + \log\log \bar k(t) + \log \alpha\}
$$
is unbounded. To see this, we recall that by Stirling's formula, 
$\Gamma(r+\tau+1) \sim r^{\tau}\, \Gamma(r+1)$ as $r \to \infty\,$, uniformly
for $\tau$ in any bounded interval. Using this, \eqref{eq:Lambda} and 
\eqref{eq:alpha-t}, 
$$
\frac{1}{\Gamma\bigl(\bar k(t) +1\bigr)} \sim 
\frac{\bar k(t)^{\bar\tau(t)}}{\bar v(t)} = 
\frac{\Lambda\bigl(\bar v(t)\bigr)^{1/\alpha}\,\bar k(t)^{\bar\tau(t)}}
{\phi\bigl(\bar v(t)\bigr)^{1/\alpha}} 
\sim \frac{\bar k(t)^{\bar\tau(t)}}
{(\alpha\,t)^{1/\alpha}\,\phi\bigl(\bar r(t) \log \bar r(t)\bigr)^{1/\alpha}},
$$
so that
$$
\Delta_1(t) \sim   \frac{\phi\bigl(\Gamma(\bar k(t)+1)\bigr)}
                        {\phi\bigl(\bar r(t) \log \bar r(t)\bigr)} 
\,\alpha^{-1} \bar k(t)^{\alpha \, \bar\tau(t)}\sim  
\alpha^{-1} k(t)^{\alpha \, \bar\tau(t)} \,,\quad \text{as }\; t \to \infty.
$$
Thus, the defining equation for the set $T_1$ becomes
$$
\alpha^{-1}\bar k(t)^{\alpha \, \bar\tau(t)}\bigl(1 + o(1)\bigr) 
= \log \bar k(t) + \log\log \bar k(t) + \log \alpha\,,
$$
which transforms into 
\begin{equation}\label{eq:tau}
\bar \tau(t) = \frac{\log\log \bar k(t)}{\alpha \log \bar k(t)}\bigl(1 + o(1)\bigr). 
\end{equation}
At this point, we recall from above the set $T_0 = \{ t > t_0 : \bar \tau(t) = 0\}$, 
which is discrete, countable and unbounded,
so that it has the form  $\{t_j : j \in \mathbb{N}\}$ with $t_j < t_{j+1} \to \infty\,$.
Since $\bar r(t)$ is continuous and strictly increasing, the function 
$\bar\tau:  [t_j\,,\,t_{j+1}) \to [0,1)$ is continuous, strictly increasing and surjective.
Therefore, if $t_j$ is sufficiently large, the equation \eqref{eq:tau} has precisely
one solution in $[t_j\,,\,t_{j+1})$, and $T_1$ is unbounded. 

\smallskip

Now we can let $t \to \infty$ within $T_1\,$,
and then
$$
\frac{f\bigl(\Delta_1(t)\bigr)}{f\bigl(\Delta_2(t)\bigr)} = 
\exp\Bigl( 
\log\bigl( \delta(t)^{1/\alpha}\bigr)-\Delta_1(t)\bigl(1- 1/\delta(t)\bigr)  
\Bigr) \to 0\,,
$$
because we have $\delta(t) \sim \bar k(t)^{\alpha}$, so that
$$
\log\bigl( \delta(t)^{1/\alpha}\bigr)-\Delta_1(t)\bigl(1- 1/\delta(t)\bigr)
\sim
- \log\log \bar k(t)\,.
$$ 
We also see that $\Delta_2(t) \to 0$, so that
$$
f\bigl(\Delta_1(t)\bigr) + f\bigl(\Delta_2(t)\bigr) \sim f\bigl(\Delta_2(t)\bigr)
\sim \Delta_2(t)^{1/\alpha} 
\sim \frac{\bigl(\log \rho(t)\bigr)^{1/\alpha}}{\rho(t)}\,, 
$$
as $t \to \infty$ within $T_1\,$. This proves \eqref{eq:f-low}.
\end{proof}

\bigskip

\noindent Alexander Bendikov, Instytut Matematyczny, Uniwersytet Wrocławski,\\
Pl. Grunwaldzki 2/4, 50-384 Wrocław, Poland\\
e-mail: bendikov@math.uni.wroc.pl

\medskip
\noindent Wojciech Cygan, Institut für Diskrete Mathematik, Technische Universität Graz,
Steyrergasse 30, A-8010 Graz, Austria\\
Instytut Matematyczny, Uniwersytet Wrocławski, Pl. Grunwaldzki 2/4, 50-384 Wrocław, Poland\\
e-mail: wojciech.cygan@uwr.edu.pl

\medskip
\noindent Wolfgang Woess, Institut für Diskrete Mathematik, Technische Universität Graz,
Steyrergasse 30, A-8010 Graz, Austria\\
e-mail: woess@tugraz.at


\begin{thebibliography}{99}    
\bibitem {Abram}M. Abramowitz and I.A. Stegun: \emph{Handbook of Mathematical
Functions With Formulas, Graphs, and Mathematical Tables.} Ninth Printing,
National Bureau of Standards, 1970.
                                                                                           %

\bibitem {AlbeverioKarwowski}S. Albeverio and W. Karwowski: \emph{A random walk on
$p$-adic numbers: generator and its spectrum.} Stochastic Process. Appl.
{\bf 53} (1994) 1--22.

\bibitem {BenBob}A. Bendikov and B. Bobikau: \emph{Long time behaviour of random
walks on abelian groups.} Colloq. Math. {\bf 118} (2010) 445--464.

\bibitem {BenBobPit2013}A. Bendikov, B. Bobikau and Ch. Pittet: \emph{Spectral
properties of a class of random walks on locally finite groups.} Groups Geom.
Dyn. {\bf 7} (2013) 791--820.

\bibitem {BGP}A. Bendikov, A. Grigoryan and C. Pittet: \emph{On a class of Markov
semigroups on discrete ultrametric spaces.} Potential Anal. {\bf 37} (2012) 125--169.

\bibitem {BGPW}A. Bendikov, A. Grigoryan, C. Pittet and W. Woess: \emph{Isotropic
Markov semigroups on ultrametric spaces.} Russian Math. Surveys {\bf 69} (2014) 589--680.

\bibitem {BendikovKrupski}A. Bendikov and P. Krupski: \emph{On the spectrum of the
hierarchical Laplacian.} Potential Anal. {\bf 41} (2014) 1247--1266.


\bibitem {BenLSC2016}A. Bendikov and L. Saloff-Coste: \emph{Random walks on
countable groups.}  In: \emph{Groups, Graphs, and Random Walks.} T. Ceccherini-Silberstein,
M. Salvatori and E. Sava-Huss, editors.
London Math. Soc. Lecture Note Series {\bf 436},
Cambridge University Press; to appear 2017.

\bibitem{Bojdecki}
T. Bojdecki, L.G. Gorostiza and A. Talarczyk:
\emph{Number variance for hierarchical random walks and related fluctuations.}
Electron. J. Probab. 16 (2011) 2059--2079.

\bibitem {BinGolTeu}N.H.~Bingham, C.M.~Goldie and J.L.~Teugels:
\emph{Regular Variation.} Cambridge University Press, Cambridge, 1987. 

\bibitem {Bovier}A. Bovier: \emph{The density of state in the Anderson model at weak
disorder: a renormalization group analysis of the hierarchical model.}  J. Statist. 
Phys. {\bf 59} (1990) 745--779.

\bibitem {BrofWoess2001}S. Brofferio and W. Woess: \emph{On transience of card
shuffling.} Proc. Amer. Math. Soc. {\bf 129} (2001) 1513--1519.

\bibitem {Cartwright}D. I. Cartwright: \emph{Random walks on direct sums of discrete
groups.} J. Theoret. Probab. {\bf 1} (1988) 341--356.

\bibitem {DarErd}D. A. Darling and P. Erd\H{o}s:
\emph{On the recurrence of a certain chain.}
Proc. Amer. Math. Soc. {\bf 19} (1968) 336--338. 

\bibitem{Dawson}
D.A. Dawson, L.G. Gorostiza and A. Wakolbinger:
\emph{Degrees of transience and recurrence and hierarchical random walks.} 
Potential Anal. 22 No. 4 (2005) 305--350.

\bibitem{Dawson2}
D.A. Dawson, L.G. Gorostiza and A. Wakolbinger:
\emph{Hierarchical random walks.} In 
Horvárth, Lajos (ed.) et al., Asymptotic methods in stochastics. Festschrift for 
M. Cs\"{o}rg\H{o}. Proceedings of the International Conference (ICAMS '02) held at Carleton University, Ottawa, ON, May 23–25, 2002. Fields Institute Communications,
44. American Mathematical Society, Providence, R.I., 2004, 173–-193.

\bibitem{Dellacherie2009} C. Dellacherie, S. Martinez and J. San Martin: 
\emph{Ultrametric and tree potential.} J. Theoret. Probab. {\bf 22} (2009) 311--347. 

\bibitem {Dyson2}F.J. Dyson: \emph{Existence of a phase-transition in a
one-dimensional Ising ferromagnet.} Comm. Math. Phys. {\bf 12} (1969) 91--107.

\bibitem {Figa-Tal1}M. Del Muto and A. Fig\`{a}-Talamanca: \emph{Diffusion on
locally compact ultrametric spaces.} Expo. Math. {\bf 22} (2004) 197--211.

\bibitem {Figa-Tal2}M. Del Muto and A. Fig\`{a}-Talamanca: \emph{Anisotropic
diffusion on totally disconnected abelian groups.} Pacific J. Math. {\bf 225} 
(2006) 221--229.

\bibitem{FlatPitt}L. Flatto and J. Pitt: 
\emph{Recurrence criteria for random walks on countable Abelian groups,}
Illinois J. Math. {\bf 18} (1974) 1--19. 

\bibitem {GraWoess}P. J. Grabner and W. Woess: \emph{Functional iterations and
periodic oscillation for simple random walk on the Sierpinski graph.}
Stochastic Process. Appl. {\bf 69} (1997) 127--138.

\bibitem{Katok} S. Katok: \emph{p-adic Analysis Compared with Real.} 
Student Mathematical Library {\bf 37}. 
American Mathematical Society, Providence, RI, 2007.


\bibitem {Koblitz}N. Koblitz: \emph{$p$-adic Numbers, $p$-adic Analysis, and
Zeta-functions.} Second edition. Graduate Texts in Mathematics {\bf 58}, Springer-Verlag,
New Yourk, 1984.

\bibitem {Kochubey2004}A.N. Kochubei: \emph{Pseudo-differential Equations and
Stochastics over non-Archimedian Fields.} Monographs and Textbooks in Pure and
Applied Mathematics {\bf 244}, Marcel Dekker, New York, 2001.

\bibitem {Kozyrev}S. V. Kozyrev: \emph{Wavlets and spectral analysis of ultrametric
pseudo-differential operators.} Mat. Sb. {\bf 198} (2007) 103--126.

\bibitem{KroenTeufl}B. Kr\"on, Bernhard and E. Teufl: \emph{Asymptotics of 
the transition probabilities of the simple random walk on self-similar graphs,} 
Trans. Amer. Math. Soc. {\bf 356:1} (2004), 393--414

\bibitem {Krutikov1}D. Krutikov: \emph{On an essential spectrum of the $p$-adic
Schr\"{o}dinger-type operator in the Anderson model.} Lett. Math. Phys.{\bf 57}
(2001) 83--861.


\bibitem {Kritchevski1}E. Kritchevski: \emph{Hierarchical Anderson model.} 
In: \emph{Probability and Mathematical Physics,} 309--322, 
CRM Proc. Lecture Notes {\bf 42}, Amer. Math. Soc., Providence, RI, 2007.

\bibitem {Kritchevski2}E. Kritchevski: \emph{Spectral localization in the
hierarchical Anderson model.} Proc. Amer. Math. Soc. {\bf 135} (2007) 1431--1440.


\bibitem {Lawler}G. F. Lawler: \emph{Recurrence and transience for a card shuffling
model.} Combin. Probab. Comput. {\bf 4} (1995) 133--142.

\bibitem {Molchanov}S. A. Molchanov: \emph{Hierarchical random matrices and
operators. Application to Anderson model.} In: \emph{Multidimensional Statistical 
Analysis and Theory of Random Matrices,} 179--194, VSP, Utrecht, 1996

\bibitem {PearsonBellisard}J. Pearson and J. Bellisard: \emph{Noncommutative
riemannian geometry and diffusion on ultrametric cantor sets} J. Noncommut.
Geometry {\bf 3} (2009) 447--480.

\bibitem {Rodriges-Zuniga}J. J. Rodriguez-Vega and W. A.
Z\'{u}\~{n}iga-Galindo: \emph{Taibleson operators, $p$-adic parabolic equations and
ultrametric diffusion.} Pacific J. Math. {\bf 237} (2008) 327--347.

\bibitem {Taibleson75}M.H. Taibleson: \emph{Fourier Analysis on Local Fields.}
Princeton Univ. Press, 1975.

\bibitem {Vladimirov}V.S. Vladimirov: \emph{Generalized functions over the field of
$p$-adic numbers.} (Russian) Uspekhi Mat. Nauk {\bf 43} (1988) 17--53.  
English translation in Russian Math. Surveys {\bf 43} (1988) 19--64 

\bibitem {VladimirovVolovich}V.S. Vladimirov and I.V. Volovich: \emph{$p$-adic
Schr\"{o}dinger-type equation.} Letters Math. Phys. {\bf 18} (1989) 43--53.

\bibitem {Vladimirov94}V.S. Vladimirov, I.V. Volovich and E.I. Zelenov: \emph{
$p$-adic Analysis and Mathematical Physics.} Series on Soviet and East European
Mathematics {\bf 1}, World Scientific Publishing, River Edge, NY 1994.

\bibitem {Wbook} W. Woess: \emph{Random Walks on Infinite Graphs and Groups.}
Cambridge Tracts in Math. {\bf 138}, Cambridge Univ. Press 2000.
 
\bibitem {Zuniga}W. A. Z\'{u}\~{n}iga-Galindo: \emph{Parabolic equations and Markov
processes over $p$-adic fields.} Potential Anal. {\bf 28} (2008) 185--200.

\end{thebibliography}
\end{document}